\documentclass[oneside]{article}
\usepackage{amssymb,amsfonts,amsmath,amsthm}
\usepackage[letterpaper]{geometry}
\usepackage{hyperref}
\usepackage{bm}
\usepackage{graphicx}
\usepackage[ruled,linesnumbered]{algorithm2e}
\usepackage{xcolor}
\usepackage{booktabs}

\newtheorem{lemma}{Lemma}
\newtheorem{theorem}{Theorem}
\newtheorem{definition}{Definition}
\newtheorem{proposition}{Proposition}
\newtheorem{remark}{Remark}
\newtheorem{example}{Example}

\def\R{\mathbb{R}}

\def\d{{\rm d}}
\def\rank{\mathop{\rm rank}}
\def\tr{\mathop{\rm tr}}

\def\diag{\mathop{\rm diag}}

\def\B#1{\left\{#1\right\}}
\def\norm#1{\left\lVert#1\right\rVert}

\allowdisplaybreaks
\title{Nonnegative Low-Rank Matrix Correction under an Orthogonality Constraint in Conservative Vlasov Simulations}
\author{
    Yue Wu\thanks{Division of Applied Mathematics, Brown University, 182 George Street, Providence, RI 02912 (\texttt{yue\_wu3@brown.edu}).},
    Stephen Becker\thanks{Department of Applied Mathematics, University of Colorado Boulder, 526 UCB, Boulder, CO 80309 (\texttt{stephen.becker@colorado.edu}).},
    Jingmei Qiu\thanks{Department of Mathematical Sciences, University of Delaware, 501 Ewing Hall, Newark, DE 19716 (\texttt{jingqiu@udel.edu}).},
    Xiangxiong Zhang\thanks{Corresponding author. Department of Mathematics, Purdue University, 150 N.\ University Street, West Lafayette, IN 47907 (\texttt{zhan1966@purdue.edu}). This work is partially supported by NSF DMS-2208518.}
}
\date{\today}
\begin{document}
\pagestyle{plain}
\maketitle

\begin{abstract}
In low-rank numerical methods for Vlasov dynamics, the SVD-type truncation procedure may introduce negative entries into the numerical solution. Such negative values are unphysical because the solution is a probability distribution function. We design optimization-based post-processing algorithms to recover nonnegativity while preserving the macroscopic quantities (density, momentum, and energy) pointwise. The preservation of the macroscopic quantities is written as an orthogonality constraint on the correction term. For a convex formulation based on squared nuclear norm minimization, we show that the proximal operator with the orthogonality constraint is characterized by an implicit singular value thresholding equation, and the threshold can be computed efficiently by bisection. Based on this result, we develop five algorithms for the convex formulation: Douglas--Rachford splitting, restarted dual FISTA, restarted dual accelerated gradient descent, dual PR+ conjugate gradient, and dual L-BFGS. We also consider a non-convex formulation with an explicit rank constraint and develop a tangent-space accelerated alternating projection algorithm that only requires a \(2r \times 2r\) SVD per iteration. Numerical results for a Landau damping test case show that the proposed algorithms give comparable correction quality. Among them, the tangent-space accelerated alternating projection is the most cost-efficient, increasingly so as the problem size grows. We further demonstrate the correction as a positivity limiter inside a time-dependent conservative low-rank Vlasov solver, where it removes the negativity introduced by the SVD-type truncation while preserving the conserved mass, momentum, and energy.
\end{abstract}

\noindent\textbf{Keywords:} Nonnegative low-rank matrix approximation $\cdot$ Nuclear norm minimization $\cdot$ Orthogonality constraint $\cdot$ Convex optimization $\cdot$ Vlasov dynamics $\cdot$ Alternating projection on manifolds

\smallskip

\noindent\textbf{Mathematics Subject Classification (2020)} 65F55 $\cdot$ 15A83 $\cdot$ 90C25 $\cdot$ 90C26 $\cdot$ 65K10 $\cdot$ 35Q83

\bigskip


\section{Introduction}

Numerical simulation of the Vlasov--Poisson (VP) system is fundamental to understanding the complex dynamics of collisionless plasmas and has a wide range of applications in science and engineering, such as fusion energy and space weather prediction. The VP system describes the evolution of the probability distribution function \(f(\bm x,\bm v,t)\) in phase space, which by definition must be nonnegative. Its velocity moments, including mass density, momentum density, and energy density, satisfy a system of macroscopic conservation laws. Preserving both nonnegativity and conservation at the discrete level is essential for producing physically meaningful solutions and for the nonlinear stability of the numerical scheme, especially in demanding regimes such as low density and pressure.

The high dimensionality of phase space poses a major computational challenge for deterministic VP solvers. Recently, low-rank and adaptive-rank tensor methods \cite{einkemmer2025review} have emerged as promising approaches for mitigating the curse of dimensionality by adaptively exploiting low-rank structure in the solution. Broadly, these methods can be divided into two classes. The first is the dynamical low-rank approach, which evolves the low-rank factors by projecting the governing equation onto the tangent space of the low-rank manifold \cite{koch2007dynamical}. Robust and rank-adaptive time integrators have been developed within this framework \cite{lubich2014projector, ceruti2022unconventional, ceruti2022rankadaptive, dektor2025interpolatory}. The second is the step-and-truncate approach, in which the solution is first advanced in an enriched approximation space and subsequently compressed to control the rank. Within a Galerkin framework, the method proposed in \cite{guo2022low} dynamically constructs an adaptive low-rank solution basis by adding new basis functions generated from the discretized PDE and removing redundant components through SVD-type truncation. Within a collocation framework \cite{zheng2025semi, zheng2025semihighD}, selected rows and columns of the solution matrix are adaptively identified, allowing the solution to be updated through CUR-type matrix interpolation without forming the full high-dimensional solution.
An important challenge for both classes of low-rank methods is the preservation of the physical conservation laws. Conservative SVD truncation procedures were developed in \cite{einkemmer2021mass, guo2022conservative} to preserve local mass and momentum. Recently, the Local Macroscopic Conservative (LoMaC) low-rank tensor methods developed in \cite{guo2022local, guo2024local} preserve mass, momentum, and energy locally at the discrete level. Their central idea is to evolve the macroscopic conservation laws alongside the kinetic equation, construct a reference subspace from the resulting macroscopic densities, and project the low-rank kinetic solution onto this subspace through a conservative orthogonal projection.

A remaining issue for the low-rank methods in \cite{guo2022conservative, guo2022local} is the lack of nonnegativity in the numerical solution. Since the distribution function \(f\) represents a probability density, it must be nonnegative everywhere. However, SVD-type low-rank approximations of a nonnegative matrix can introduce negative entries, and this problem is further compounded by the time-stepping procedure. Within the low-rank framework, positivity can be enforced by evolving a square-root factorization $f = g^2$, so that the represented solution is nonnegative by construction \cite{ye2024quantized}. This reformulation is tied to the structure of the transport operator and does not extend to second-order operators such as a Fokker--Planck collision term, and a nonlinear transformation of a low-rank function need not remain low rank. We instead leave the low-rank solver unchanged and recover nonnegativity by a minimal post-processing correction, which is agnostic to the underlying operator. Enforcing nonnegativity (or more generally, preserving the invariant domain of the underlying PDE) in high-order numerical methods has been an active area of research. We refer to \cite{wu2025idp} for a comprehensive survey. The main approaches include polynomial limiters based on convex decomposition of cell averages \cite{perthame1996positivity, zhang2010positivity}, flux-corrected transport (FCT) methods, and more recently, optimization-based limiters. In the optimization-based approach, the limiter seeks a minimal modification of the numerical solution while enforcing conservation and positivity (or bounds) as constraints. Such optimization-based methods have been developed for enforcing nonnegativity of polynomial approximations \cite{chen2025nonneg} and for invariant-domain-preserving limiters in gas dynamics \cite{liu2026optlimiter}, where first-order operator splitting methods such as the Douglas--Rachford splitting \cite{lions1979splitting} are employed to efficiently solve the resulting constrained minimization problems. 
At the representation level, \cite{tang2025variational} proposes a nonnegative tensor train optimization framework for high-dimensional distribution tensors, whose complexity scales linearly with both the dimension \(d\) and the number of degrees of freedom per dimension \(n\).  

In this work, we design optimization-based post-processing algorithms to recover the nonnegativity of the low-rank solution produced by \cite{guo2022conservative, guo2022local}, while preserving the macroscopic quantities (density, momentum, and energy) pointwise. The problem is to find a correction term \(\bm X\) such that \(\bm A + \bm X \geqslant \bm 0\) and \(\bm X\bm B = \bm 0\), where \(\bm A\) is the possibly negative scaled solution matrix and \(\bm B\) encodes the orthogonality constraint associated with the first three velocity moments. We further split \(\bm A = \bm A_1 + \bm A_2\), where \(\bm A_1\) is a rank-3 part that carries the macroscopic quantities and \(\bm A_2\) is the remainder. To make subsequent computations efficient, it is desirable for the correction $\bm X$ to be low-rank as well. A standard approach is to use nuclear norm minimization \cite{cai2010singular} as a convex relaxation of rank minimization, which leads to the formulation
\begin{equation}
\label{F1-1 (non-homo)}
    \begin{aligned}
        & \min_{\bm X} \norm{\bm X}_{\ast} + \frac{a}{2}\norm{\bm X - \bm A_2}_{F}^2,  \\
        & \text{subject to:\quad} \bm X \geqslant -\bm A_1,\ \bm X\bm B = \bm 0,
    \end{aligned}
\end{equation}
where \(\norm{\cdot}_{\ast}\) denotes the nuclear norm (sum of singular values) and \(a > 0\) is a penalty parameter that controls the trade-off between low rank and approximation error, and the corrected solution is $\bm A_1 + \bm X$.  
However, this formulation has two drawbacks. It is not consistent, since if \(\bm A\) is already nonnegative, the solution is not necessarily the identity correction. It is also not homogeneous, since scaling the input does not scale the output. To address these two issues, we reinterpret \(\bm X\) as a correction term added to \(\bm A\) rather than an approximation to \(\bm A_2\), so the Frobenius term penalizes the magnitude of the correction itself rather than an approximation error. We also replace the nuclear norm by its square, which makes the objective homogeneous of degree two. This leads to the formulation
\begin{equation}
\label{F2-1 (homo)}
    \begin{aligned}
        & \min_{\bm X} \norm{\bm X}_{\ast}^2 + \frac{a}{2}\norm{\bm X}_{F}^2,  \\
        & \text{subject to:\quad} \bm X \geqslant -\bm A,\ \bm X\bm B = \bm 0,
    \end{aligned}
\end{equation}
where \(\bm X\) is now a correction term and the corrected solution is \(\bm A + \bm X\). This formulation is both consistent and homogeneous. One major computational challenge is to evaluate the proximal operator of the squared nuclear norm under the orthogonality constraint \(\bm X\bm B = \bm 0\). For the standard nuclear norm in \eqref{F1-1 (non-homo)}, the proximal operator is the singular value soft thresholding operator \cite{cai2010singular}.  For the squared nuclear norm in \eqref{F2-1 (homo)}, the proximal operator can still be expressed as singular value soft thresholding, but the threshold is now implicit: it depends on the nuclear norm of the unknown minimizer itself. We derive this characterization in Theorem~\ref{thm:proximal} and show that the implicit threshold can be computed by a bisection search that requires only \(\mathcal{O}(\log(\min(m,n)))\) singular value comparisons for an \(m \times n\) matrix. 

We consider two formulations for the nonnegative correction problem. For the convex problem \eqref{F2-1 (homo)}, we develop five algorithms: Douglas--Rachford splitting \cite{lions1979splitting} on the primal problem, restarted dual FISTA \cite{doi:10.1137/080716542}, restarted dual accelerated gradient descent, dual Polak--Ribi\`{e}re plus conjugate gradient, and dual L-BFGS \cite{liu1989limited}. These algorithms converge to a global minimizer. We also consider a non-convex formulation with an explicit rank constraint on the correction term. For this formulation, we develop a tangent-space accelerated alternating projection (TAP) algorithm inspired by \cite{song2020tangent}. The key idea is to work on the tangent space of the low-rank manifold so that each iteration only requires a \(2r \times 2r\) SVD for a rank $r$ solution instead of a full SVD. Beyond the fixed correction problem, we demonstrate the correction as a positivity limiter inside the time-dependent LoMaC conservative low-rank scheme of \cite{guo2022local}. Applied periodically as the solution is advanced, it removes the negativity introduced by the SVD-type truncation while leaving the conserved moments untouched, so that mass, momentum, and energy remain conserved.

The rest of the paper is organized as follows. In Section~2, we describe the problem background, including the low-rank solution structure for Vlasov dynamics and the conservative orthogonal projection from \cite{guo2022conservative, guo2022local}. In Section~3, we formulate the nonnegative low-rank matrix correction problem with orthogonality constraint and present three optimization formulations. In Section~4, we develop the numerical algorithms for both the convex and non-convex formulations. In Section~5, we present numerical results on a Landau damping test case that demonstrate the effectiveness and compare the performance of all proposed algorithms, study their empirical cost scaling with problem size, and demonstrate the correction as a positivity limiter in a time-dependent conservative low-rank Vlasov solver. We conclude in Section~6.

\section{Low-rank Vlasov dynamics and the orthogonality constraint}

\subsection{Low-rank Vlasov solutions}

We consider a 1D1V Vlasov system \cite{guo2022low}
\begin{align}
    &\frac{\partial f}{\partial t} + v \cdot \nabla_{x} f - E(x, t) \cdot \nabla_{v} f = 0, \quad \Omega_{x} \times \Omega_{v} \times (0,T], \nonumber\\
    &-\Delta_{x} \phi = \rho(x, t), \quad
    \rho(x, t) = \rho_0 - \int_{\mathbb{R}^{d_v}} f(x, v, t) \,\mathrm{d}v,
    \quad \Omega_{x} \times (0,T], 
 \nonumber\\
    &E(x,t) = -\nabla_{x} \phi, 
    \quad \Omega_{x} \times (0,T], 
\nonumber
\end{align}
where $f$ is the probability density function of charged particles, with a corresponding charge density $\rho$, which generates a self-consistent electrostatic potential $\phi$ via the Poisson equation. The Vlasov equation is discretized on the phase space \([x_{\min}, x_{\max}]\times[-v_{\max}, v_{\max}]\) with an equidistant \(N_{x}\times N_{v}\) Cartesian grid
\begin{eqnarray*}
    & x_{\text{grid}}: x_{\min} = x_1 < \cdots < x_{N_{x}} = x_{\max}, \\
    & v_{\text{grid}}: -v_{\max} = v_1 < \cdots < v_{N_{v}} = v_{\max}.
\end{eqnarray*}
We denote the mesh size in the \(v\)-direction by \(h_v\). The numerical solution \(\bm F \in \R^{N_{x}\times N_{v}}\), with \(\bm F_{i,j} \approx f(x_i, v_j)\) approximating the probability distribution function \(f(x,v)\), is stored in a low-rank format
\begin{equation}
    \bm F
    = \bm F_{1} + \bm F_{2}
    = \bm U_{1}\bm V_{1}^T + \bm U_{2}\bm V_{2}^T,
\end{equation}
where \(\bm F_1\) is a rank-3 term that carries the three macroscopic quantities (density, momentum, and internal energy) of \(\bm F\), and \(\bm F_2\) is the remainder. This decomposition is the key structure underlying the conservative low-rank tensor methods in \cite{guo2022conservative, guo2022local}.

The three macroscopic quantities are defined as velocity moments of \(f(x,v)\):
\begin{subequations}
    \begin{eqnarray}
        & \rho(x) = \int f(x,v)\d v, \\
        & m(x) = \int v f(x,v)\d v, \\
        & e(x) = \int v^2 f(x,v)\d v.
    \end{eqnarray}
\end{subequations}
Let \(\bm 1 = (1,\dots,1) \in \R^{1 \times N_v}\), \(\bm v = (v_1,\dots,v_{N_v}) \in \R^{1 \times N_v}\), and \(\bm v^{\circ 2} = (v_1^2,\dots,v_{N_v}^2) \in \R^{1 \times N_v}\). The discrete macroscopic quantities are computed as
\begin{subequations}
    \begin{eqnarray}
        & \bm \rho = h_v \bm F \bm 1^T = h_v \bm F_1 \bm 1^T, \\
        & \bm m = h_v \bm F \bm v^T = h_v \bm F_1 \bm v^T, \\
        & \bm e = h_v \bm F (\bm v^{\circ 2})^T = h_v \bm F_1 (\bm v^{\circ 2})^T,
    \end{eqnarray}
\end{subequations}
where the second equalities hold by the requirement that \(\bm F_1\) carries all three macroscopic quantities, i.e., \(\bm F_2\) has zero velocity moments.

\subsection{Conservative orthogonal decomposition}

Following \cite{guo2022conservative}, we construct the decomposition \(\bm F = \bm F_1 + \bm F_2\) using a weighted inner product in the velocity space. Let
\begin{equation}
    \bm R
    =\begin{pmatrix}
        \bm 1 \\
        \bm v \\
        \bm v^{\circ 2}
    \end{pmatrix}
    \in\R^{3\times N_v}
\end{equation}
be the matrix whose rows span the space of velocity moments, and let
\begin{equation}
    \bm S = \diag\left(w(v_{1}), w(v_{2}), \cdots, w(v_{N_v})\right) \in\R^{N_v \times N_v}
\end{equation}
be a diagonal weight matrix with pointwise-positive weight function \(w(v) > 0\). The requirement that \(\bm F_2\) has zero velocity moments is expressed as
\begin{equation}
    \bm F_2 \bm R^T = \bm 0, \qquad \text{or equivalently,} \qquad \bm F_1 \bm R^T = \bm F \bm R^T.
\end{equation}
Additionally, we require \({\rm rank}(\bm F_1) = 3\) and that the row space of \(\bm F_1\) lies in the column space of \(\bm R\bm S\). This gives the system
\begin{equation}
    \begin{cases}
        \bm F_1 = \bm L \bm R \bm S, \\
        \bm F_1 \bm R^T = \bm F \bm R^T,
    \end{cases}
\end{equation}
where \(\bm L \in \R^{N_x\times 3}\). To solve for \(\bm F_1\) explicitly, we compute a QR decomposition
\begin{equation}
    \sqrt{\bm S}\,\bm R^T = \bm B\widetilde{\bm R},
\end{equation}
where \(\bm B \in \R^{N_v \times 3}\) has orthonormal columns and \(\widetilde{\bm R} \in {\rm GL}_{3}(\R)\). Then \(\bm F_1\) is given by
\begin{equation}\label{F1-formula}
    \bm F_1 = \bm F \sqrt{\bm S}^{-1} \bm B\bm B^{T}\sqrt{\bm S}.
\end{equation}
Thus, \(\bm F_1\) is the orthogonal projection of \(\bm F\) (with respect to the weighted inner product defined by \(\bm S\)) onto the three-dimensional subspace spanned by the velocity moment basis functions \(\bm 1, \bm v, \bm v^{\circ 2}\).

\subsection{Rescaled problem and the orthogonality constraint}\label{sec:rescaled}

To formulate the correction problem, we introduce the rescaled matrices
\begin{subequations}\label{rescaled-matrices}
    \begin{eqnarray}
        & \bm A_1 = \bm F_1\sqrt{\bm S}^{-1},  \\
        & \bm A_2 = \bm F_2\sqrt{\bm S}^{-1},  \\
        & \bm A = \bm A_1 + \bm A_2 = \bm F\sqrt{\bm S}^{-1}.
    \end{eqnarray}
\end{subequations}
The rescaling converts the weighted inner product to the standard Frobenius inner product, so that the orthogonal projection \eqref{F1-formula} takes the simple form \(\bm A_1 = \bm A \bm B\bm B^T\). Note that \(\bm A_1\) and \(\bm A_2\) inherit the low-rank structure of \(\bm F_1\) and \(\bm F_2\), and the nonnegativity of \(\bm F\) is equivalent to the nonnegativity of \(\bm A\) (since \(\bm S\) has positive diagonal entries).

A remaining issue for adaptive rank approaches is that the matrix \(\bm A\) (or equivalently \(\bm F\)) may contain negative entries. We seek a correction term \(\bm X\) such that \(\bm A + \bm X \geqslant 0\). We also require the corrected solution \(\bm A + \bm X\) to have the same macroscopic quantities as \(\bm A\). Since \(\bm A_1 = \bm A \bm B\bm B^T\) carries the macroscopic information, the requirement that the correction \(\bm X\) does not alter the macroscopic quantities is equivalent to
\begin{equation}\label{orthogonality-constraint}
    \bm X \bm B = \bm 0.
\end{equation}
This is the orthogonality constraint: the correction term \(\bm X\) must lie in the orthogonal complement of the column space of \(\bm B\). In the following, we use \(\bm B \in \R^{N_v \times 3}\) to denote the matrix with orthonormal columns defined in this section.

\section{Nonnegative low-rank matrix correction: optimization formulations}

With the setup from the previous section, the goal is to find a correction term \(\bm X\) such that the corrected solution \(\bm A + \bm X\) is nonnegative and preserves the macroscopic quantities. The constraints are
\[
    \bm A + \bm X \geqslant 0, \qquad \bm X \bm B = \bm 0.
\]
Ideally, the correction \(\bm X\) should have low rank so that the corrected solution retains a compact low-rank representation. We present three optimization formulations for this problem.

\subsection{The approximation formulation}\label{sec:approx-formulation}

We first seek a nonnegative low-rank approximation \(\bm X\) to the remainder \(\bm A_2\), so that the corrected solution \(\bm A_1 + \bm X\) is nonnegative and preserves the macroscopic quantities. The ideal problem is
\begin{equation}\label{low-rank approximation}
    \min_{\bm X} \rank(\bm X) \text{\ \ and\ \ }\norm{\bm X - \bm A_2}_{F},
    \quad \text{subject to} \quad \bm X + \bm A_1 \geqslant 0,\ \bm X\bm B = \bm 0.
\end{equation}
However, rank minimization is NP-hard in general. A standard convex relaxation technique is to replace the rank function by the nuclear norm \cite{cai2010singular}.

\begin{definition}[nuclear norm]
    For any matrix \(\bm A \in \R^{m\times n}\), its nuclear norm is defined as
    \begin{equation}
        \norm{\bm A}_{\ast} = \sum_{k=1}^{\min\B{m,n}} \sigma_{k},
    \end{equation}
    where \(\sigma_k\) is the \(k\)-th singular value of \(\bm A\).
\end{definition}

The nuclear norm is the tightest convex lower bound of the rank function on the unit spectral norm ball. Minimizing the nuclear norm therefore promotes low-rank solutions in the same way that minimizing the \(\ell^1\) norm of a vector promotes sparsity. We record a standard property that will be used in the algorithm derivation, using $\|\cdot\|_2$ to denote the spectral norm.

\begin{proposition}\label{prop:subgradient}
    Suppose the compact SVD of \(\bm X \in\R^{m\times n}\) is \(\bm X = \bm U \bm\Sigma\bm V^T\). Then the  subdifferential of \(\norm{\cdot}_{\ast}\) at \(\bm X\) is
    \begin{equation}
        \partial \norm{\bm X}_{\ast} = \B{\bm U\bm V^T + \bm W \;\big|\; \bm U^T \bm W = \bm 0,\ \bm W \bm V = \bm 0,\ \norm{\bm W}_{2}\leqslant 1}.
    \end{equation}
\end{proposition}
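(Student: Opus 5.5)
Since the nuclear norm is convex and finite everywhere, I will use the norm-duality description of its subdifferential: for any norm, \(\partial\norm{\bm X} = \{\bm G : \langle \bm G,\bm X\rangle = \norm{\bm X},\ \norm{\bm G}_{\text{dual}}\leqslant 1\}\). The dual of the nuclear norm under the trace inner product \(\langle \bm G,\bm X\rangle=\tr(\bm G^T\bm X)\) is the spectral norm \(\norm{\cdot}_2\). This follows from von Neumann's trace inequality \(\tr(\bm G^T\bm X)\leqslant \sum_k \sigma_k(\bm G)\sigma_k(\bm X)\leqslant \norm{\bm G}_2\norm{\bm X}_\ast\), with equality attained at \(\bm G = \bm U\bm V^T\). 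The proof then reduces to showing that the set \(\{\bm G: \norm{\bm G}_2\leqslant 1,\ \langle\bm G,\bm X\rangle=\norm{\bm X}_\ast\}\) equals the set in the statement.

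\textbf{Inclusion \(\supseteq\).} Take \(\bm G=\bm U\bm V^T+\bm W\) with \(\bm U^T\bm W=\bm 0\), \(\bm W\bm V=\bm 0\), \(\norm{\bm W}_2\leqslant1\). Then
\[
\langle \bm G,\bm X\rangle=\tr(\bm V\bm U^T\bm U\bm\Sigma\bm V^T)+\tr(\bm W^T\bm U\bm\Sigma\bm V^T)=\tr\bm\Sigma=\norm{\bm X}_\ast .
\]
For the spectral norm bound, note that \(\bm U\bm V^T\) and \(\bm W\) have mutually orthogonal column spaces and mutually orthogonal row spaces. Hence \(\bm G^T\bm G=\bm V\bm V^T+\bm W^T\bm W\), where the two summands act on orthogonal subspaces. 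Therefore \(\norm{\bm G}_2=\max(1,\norm{\bm W}_2)\leqslant 1\).

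\textbf{Inclusion \(\subseteq\).} Let \(\norm{\bm G}_2\leqslant1\) with \(\tr(\bm G^T\bm U\bm\Sigma\bm V^T)=\sum_k\sigma_k\,\bm u_k^T\bm G\bm v_k=\sum_k\sigma_k\).
\begin{itemize}
\item Each term satisfies \(\bm u_k^T\bm G\bm v_k\leqslant\norm{\bm G}_2\leqslant1\), and every \(\sigma_k>0\) in the compact SVD. So equality of the sums forces \(\bm u_k^T\bm G\bm v_k=1\) for every \(k\).
\item Since \(\norm{\bm G\bm v_k}\leqslant1\) and its inner product with the unit vector \(\bm u_k\) equals \(1\), the equality case of Cauchy--Schwarz gives \(\bm G\bm v_k=\bm u_k\). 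The same argument with \(\bm G^T\) gives \(\bm G^T\bm u_k=\bm v_k\).
\item In matrix form, \(\bm G\bm V=\bm U\) and \(\bm U^T\bm G=\bm V^T\).
\end{itemize}
Now set \(\bm W=\bm G-\bm U\bm V^T\). Then \(\bm W\bm V=\bm U-\bm U=\bm 0\) and \(\bm U^T\bm W=\bm V^T-\bm V^T=\bm 0\).

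For the norm bound on \(\bm W\), write \(\bm W=(\bm I-\bm U\bm U^T)\bm G(\bm I-\bm V\bm V^T)\). This identity holds because \(\bm U^T\bm G=\bm V^T\) and \(\bm G\bm V=\bm U\). Since both projectors have spectral norm at most one, \(\norm{\bm W}_2\leqslant\norm{\bm G}_2\leqslant1\).

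The main obstacle is the step \(\bm u_k^T\bm G\bm v_k=1\Rightarrow\bm G\bm v_k=\bm u_k\). It needs both the equality case of Cauchy--Schwarz and the positivity of the \(\sigma_k\), which is exactly why the statement uses the compact SVD. The remaining steps are routine trace manipulations. The duality characterization of \(\partial\norm{\cdot}\) is standard and can be quoted.
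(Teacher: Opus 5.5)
Your proof is correct and complete. The paper does not prove this proposition. It records it as a standard property and uses it in the proof of Theorem~\ref{thm:proximal}, so your argument is a self-contained verification of a fact the paper takes as known, not an alternative to a proof in the text. Your route is the standard one, and every step checks out:
\begin{itemize}
\item you reduce to the dual-norm description $\partial\norm{\bm X}_\ast=\B{\bm G \mid \langle\bm G,\bm X\rangle=\norm{\bm X}_\ast,\ \norm{\bm G}_2\leqslant 1}$;
\item you obtain $\bm G\bm V=\bm U$ and $\bm U^T\bm G=\bm V^T$ from the equality case of Cauchy--Schwarz, using $\sigma_k>0$;
\item you bound $\bm W=\bm\Pi_{\bm U}^{\perp}\bm G\bm\Pi_{\bm V}^{\perp}$ by $\norm{\bm G}_2$.
\end{itemize}

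There are two minor points. First, von Neumann's trace inequality is more than you need. Write $\bm Y=\sum_k s_k\bm p_k\bm q_k^T$; the same termwise bound $\bm p_k^T\bm G\bm q_k\leqslant\norm{\bm G}_2$ that you use in the $\subseteq$ direction already gives $\langle\bm G,\bm Y\rangle\leqslant\norm{\bm G}_2\norm{\bm Y}_\ast$. Also, your equality case $\bm G=\bm U\bm V^T$ shows that the nuclear norm is the dual of the spectral norm. The direction you actually use is that the spectral norm is the dual of the nuclear norm. To get it, either invoke finite-dimensional biduality, or test with the rank-one $\bm Y$ built from a top singular vector pair of $\bm G$.

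Second, the identity $\norm{\bm G}_2=\max(1,\norm{\bm W}_2)$ is false for $\bm X=\bm 0$. There $\bm U$ and $\bm V$ are empty and $\bm G=\bm W$. The bound $\norm{\bm G}_2\leqslant 1$ that you actually need still holds in that case.
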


Using the nuclear norm as a convex surrogate for the rank, and choosing the Frobenius norm for the approximation error, we obtain the following convex optimization problem:
\begin{equation*}
    \begin{aligned}
        & \min_{\bm X} \norm{\bm X}_{\ast} + \frac{a}{2}\norm{\bm X - \bm A_2}_{F}^2,  \\
        & \text{subject to:\quad} \bm X \geqslant -\bm A_1,\ \bm X\bm B = \bm 0,
    \end{aligned}
\end{equation*}
where \(a > 0\) is a parameter that controls the trade-off between low rank and approximation accuracy. This is precisely the formulation in \eqref{F1-1 (non-homo)}.

\subsection{The correction formulation (convex)}\label{sec:correction-formulation}

The formulation \eqref{F1-1 (non-homo)} has two drawbacks. First, it is not consistent: if \(\bm A\) is already nonnegative, the minimizer \(\bm X\) of \eqref{F1-1 (non-homo)} is not necessarily \(\bm A_2\), meaning that the post-processing may alter a solution that needs no correction. This can be seen from the following example.

\begin{example}\label{ex:inconsistency}
    Drop the orthogonality constraint and consider the diagonal case: \(\bm A_1 = \bm 0\) and \(\bm A_2 = \diag\left(a_1,\cdots,a_n\right)\) with \(a_i \geqslant 0\). Then \(\norm{\bm X}_{\ast} = \sum_i |x_i|\) reduces to the \(\ell^1\) norm of the diagonal, and singular value thresholding acts entrywise, so the minimizer of \eqref{F1-1 (non-homo)} is \(\bm X = \diag\left(x_1,\cdots,x_n\right)\) with
    \[
        x_i = \max\B{a_i - \frac{1}{a},\ 0},
    \]
    which differs from \(\bm A_2\) unless \(a_i \geqslant 1/a\) for all \(i\).
\end{example}

Second, the formulation is not homogeneous: if the input is scaled by \(\alpha > 0\), the minimizer does not scale by the same factor. This can also be verified through Example~\ref{ex:inconsistency}.

To address these two issues, we seek a \emph{correction term} \(\bm X\) to be added to \(\bm A\), rather than an approximation to \(\bm A_2\). We also replace the nuclear norm by its square, which makes the objective homogeneous of degree two. This leads to the formulation
\begin{equation*}
    \begin{aligned}
        & \min_{\bm X} \norm{\bm X}_{\ast}^2 + \frac{a}{2}\norm{\bm X}_{F}^2,  \\
        & \text{subject to:\quad} \bm X \geqslant -\bm A,\ \bm X\bm B = \bm 0.
    \end{aligned}
\end{equation*}
which is the formulation \eqref{F2-1 (homo)}.
Here \(\bm X\) is the correction term and the corrected solution is \(\bm A + \bm X\). The constraint \(\bm X \geqslant -\bm A\) ensures nonnegativity of the corrected solution, and the constraint \(\bm X\bm B = \bm 0\) ensures preservation of the macroscopic quantities. The squared nuclear norm \(\norm{\bm X}_{\ast}^2\) promotes low rank (since \(\rank(\bm X + \bm Y) \leqslant \rank(\bm X) + \rank(\bm Y)\), a small correction should have low rank), while the Frobenius norm penalty \(\frac{a}{2}\norm{\bm X}_{F}^2\) penalizes large corrections. We refer to \eqref{F2-1 (homo)} as the convex formulation in the rest of the paper.

This formulation is consistent: if \(\bm A \geqslant 0\), then \(\bm X = \bm 0\) is clearly the minimizer. It is also homogeneous: if the input is \(\alpha\bm A\), the minimizer is \(\alpha\bm X\).

\begin{remark}
    In practice, we have not observed significant differences in the numerical results between the formulations \eqref{F1-1 (non-homo)} and \eqref{F2-1 (homo)} when the parameter \(a\) is well chosen.
\end{remark}

\begin{remark}[Non-emptiness of the admissible set]
    The admissible set \(\B{\bm X \in \R^{m \times n} \big| \bm X \geqslant -\bm A,\ \bm X \bm B = \bm 0}\) may be empty. It is nonempty if and only if there exists a nonnegative matrix \(\widetilde{\bm A} \in \R^{m \times n}\) with \(\widetilde{\bm A} \bm B\bm B^T = \bm A \bm B\bm B^T\), i.e., the macroscopic part of \(\bm A\) coincides with the macroscopic part of some nonnegative distribution. In the Vlasov dynamics setting, this condition is satisfied whenever the macroscopic moments \((\rho, m, e)\) are realizable, i.e., there exists a nonnegative distribution with these moments. For the first three moments, a necessary and sufficient condition is \(\rho > 0\) and \(e\rho \geqslant m^2\).
\end{remark}

\subsection{The rank-constrained formulation (non-convex)}\label{sec:nonconvex-formulation}

In some applications, it is desirable to explicitly control the rank of the correction term so that the corrected solution can be stored efficiently in a low-rank format. For a given target rank \(r\), we consider the non-convex formulation
\begin{equation}\label{F2-2}
    \begin{aligned}
        & \min_{\bm X}\norm{\bm X}_{F}^2, \\
        & \text{subject to:\quad} \rank(\bm X)\leqslant r,\ \bm X \geqslant -\bm A,\ \bm X\bm B = \bm 0.
    \end{aligned}
\end{equation}
Unlike the convex formulations \eqref{F1-1 (non-homo)} and \eqref{F2-1 (homo)}, the rank constraint makes the feasible set non-convex, so global optimality cannot be guaranteed. However, this formulation has the computational advantage that the iterates can be maintained in a rank-\(r\) factored form throughout the algorithm, as we will see in Section~4. We refer to \eqref{F2-2} as the non-convex formulation in the rest of the paper.

\section{Algorithms}\label{sec:algorithms}

In this section, we develop algorithms for both the convex formulation \eqref{F2-1 (homo)} and the non-convex formulation \eqref{F2-2}. For the convex problem, we present five algorithms in Sections~\ref{sec:DR}--\ref{sec:LBFGS}: Douglas--Rachford splitting on the primal problem, restarted dual FISTA, restarted dual accelerated gradient descent, dual PR+ conjugate gradient, and dual L-BFGS. For the non-convex problem, we develop a tangent-space accelerated alternating projection algorithm in Section~\ref{sec:TAP}. We begin with a brief review of the optimization tools used throughout this section. Readers familiar with convex optimization may skip to Section~\ref{sec:DR}.

\subsection{Convex optimization preliminaries}\label{sec:opt-prelim}

The convex formulation \eqref{F2-1 (homo)} involves minimizing a non-smooth objective (the squared nuclear norm plus a Frobenius penalty) subject to non-smooth constraints (nonnegativity and orthogonality). We briefly review the key concepts needed for the algorithms.

\paragraph{Indicator functions and subdifferentials.}
To incorporate constraints into an unconstrained framework, we use the indicator function of a convex set \(\mathcal{C}\):
\[
    I_{\mathcal{C}}(\bm X) = \begin{cases} 0 & \text{if } \bm X \in \mathcal{C}, \\ +\infty & \text{otherwise,} \end{cases}
\]
so that \(\min_{\bm X \in \mathcal{C}} \varphi(\bm X) = \min_{\bm X} \varphi(\bm X) + I_{\mathcal{C}}(\bm X)\). For a convex but non-differentiable function \(\varphi\), the gradient is replaced by the subdifferential \(\partial\varphi(\bm X_0)\), defined as the set of all \(\bm G\) satisfying \(\varphi(\bm Y) \geqslant \varphi(\bm X_0) + \langle \bm G, \bm Y - \bm X_0 \rangle\) for all \(\bm Y\), where the inner product is defined $\langle \bm Y, \bm X\rangle = \tr(\bm Y^T \bm X)$. When \(\varphi\) is differentiable, \(\partial\varphi = \{\nabla\varphi\}\). As a scalar example, \(\partial|x|(0) = [-1,1]\). The subdifferential of the nuclear norm is given in Proposition~\ref{prop:subgradient}.

\paragraph{Proximal operators.}
Many convex optimization problems can be interpreted as finding the steady state of the (sub)gradient flow
\[
    \dot{\bm X}\in -\partial\varphi(\bm X),
\]
where $\partial\varphi$ denotes the subdifferential of a proper, closed, convex function $\varphi$. When $\varphi$ is differentiable, this reduces to the familiar gradient flow
\[
    \dot{\bm X}=-\nabla\varphi(\bm X).
\]
Applying the backward Euler method with time step $\Delta t$ gives
\[
    \frac{\bm X^{k+1}-\bm X^k}{\Delta t}
    \in
    -\partial\varphi(\bm X^{k+1}),
\]
or equivalently,
\[
    \bm0
    \in
    \partial\varphi(\bm X^{k+1})
    +
    \frac{1}{\Delta t}
    (\bm X^{k+1}-\bm X^k).
\]
Remarkably, this is precisely the first-order optimality condition of the optimization problem
\begin{equation}\label{eq:prox-def}
    \bm X^{k+1}
    =
    \arg\min_{\bm X}
    \left\{
        \varphi(\bm X)
        +
        \frac{1}{2\Delta t}
        \|\bm X-\bm X^k\|_F^2
    \right\}.
\end{equation}
Identifying $\eta=1/\Delta t$ yields the proximal operator
\[
    \mathrm{prox}_{\eta^{-1}\varphi}(\bm Z)
    =
    \arg\min_{\bm X}
    \left\{
        \varphi(\bm X)
        +
        \frac{\eta}{2}
        \|\bm X-\bm Z\|_F^2
    \right\},
\]
which may therefore be viewed as a single implicit (backward Euler) step of the subgradient flow. Consequently, the proximal point iteration
\[
    \bm X^{k+1}
    =
    \mathrm{prox}_{\eta^{-1}\varphi}(\bm X^k)
\]
inherits the unconditional stability characteristic of backward Euler and converges to a minimizer of $\varphi$ under standard assumptions.
An important special case is the indicator function of a closed convex set,
\[
I_{\mathcal C}(\bm X)=
\begin{cases}
0,&\bm X\in\mathcal C,\\
+\infty,&\text{otherwise},
\end{cases}
\]
whose proximal operator reduces to the Euclidean projection,
\[
\mathrm{prox}_{\eta^{-1}I_{\mathcal C}}
=
\Pi_{\mathcal C},
\]
independent of $\eta$.

\paragraph{Splitting method.}
Many optimization problems take the composite form
\[
    \min_{\bm X}\; f(\bm X)+g(\bm X),
\]
where different terms represent different objectives or constraints. The choice of numerical method depends on the structure of these two components.
When both $f$ and $g$ are differentiable, their gradients can be combined,
\[
\nabla(f+g)=\nabla f+\nabla g,
\]
and standard gradient-based methods apply. If only one term (say $f$) is differentiable while $g$ is nonsmooth, then one may take an explicit gradient step on $f$ followed by an implicit (proximal) step on $g$, leading to the proximal-gradient method.
The more challenging situation arises when both $f$ and $g$ are nonsmooth. In principle, one could apply the proximal point method directly to the sum,
\[
\operatorname{prox}_{\eta^{-1}(f+g)},
\]
but this generally requires solving a difficult optimization problem involving both functions simultaneously. In contrast, the individual proximal operators,
\[
\operatorname{prox}_{\eta^{-1}f}
\quad\text{and}\quad
\operatorname{prox}_{\eta^{-1}g},
\]
are often available in closed form or can be computed efficiently because each function possesses a much simpler structure. Proximal splitting methods exploit this observation by replacing one difficult proximal evaluation of $f+g$ with a sequence of simpler proximal evaluations involving only $f$ and $g$. This philosophy is analogous to operator splitting methods for differential equations, where a complicated evolution operator is decomposed into simpler subproblems that can be solved individually. Douglas--Rachford splitting is one of the most successful examples of such proximal splitting methods, alternating between the proximal operators of $f$ and $g$ while still converging to a minimizer of the composite problem.
Our convex problem \eqref{F2-1 (homo)} falls precisely into this setting. The nonnegativity constraint and the orthogonality-constrained nuclear norm are both nonsmooth, making the proximal operator of their sum difficult to compute directly. However, each individual proximal operator admits an efficient closed-form solution, making Douglas--Rachford splitting a natural and effective approach.

\subsection{Douglas--Rachford splitting for the convex problem}\label{sec:DR}

\subsubsection{Splitting}

To apply Douglas--Rachford splitting, the objective should be decomposed into two convex functions whose proximal operators are individually inexpensive to evaluate. The goal is therefore not to split the objective arbitrarily, but rather to isolate the different structures so that each proximal subproblem admits either a closed-form solution or an efficient numerical algorithm.

For our problem, the two nonsmooth components arise from the nonnegativity constraint and the orthogonality-constrained nuclear norm. These two structures are naturally handled by different proximal operators, leading to the decomposition

\begin{subequations}\label{the splitting}
    \begin{eqnarray}
        & h(\bm X) = f(\bm X) + g(\bm X), \\
        & f(\bm X) = \frac{a_{1}}{2}\norm{\bm X}_{F}^{2} + I_{\B{\bm X|\bm X\geqslant -\bm A}}(\bm X), \\
        & g(\bm X) = \norm{\bm X}_{\ast}^2 + \frac{a_{2}}{2}\norm{\bm X}_{F}^2 + I_{\B{\bm X|\bm X\bm B = \bm 0}}(\bm X),
    \end{eqnarray}
\end{subequations}
where \(a_{1} + a_{2} = a\).

The quadratic regularization is split between $f$ and $g$ through the parameters $a_1$ and $a_2$. This decomposition preserves the original objective while allowing the proximal operators of both subproblems to remain well conditioned. In particular,

\begin{itemize}
\item
$f$ consists of a quadratic term together with the box constraint
$\bm X\ge -\bm A$. Its proximal operator is completely separable over the matrix entries and reduces to an elementwise projection.

\item
$g$ combines the squared nuclear norm and the linear orthogonality constraint $\bm X\bm B=\bm0$. Although this proximal operator is more involved, the orthogonality constraint has a simple linear structure, allowing a closed-form solution to be derived through the optimality conditions.
\end{itemize}

Since both $f$ and $g$ are proper, closed, and convex, Douglas--Rachford splitting is applicable. Instead of solving one difficult proximal problem involving
\[
\operatorname{prox}_{\eta^{-1}(f+g)},
\]
the algorithm alternates between the much simpler proximal operators
\[
\operatorname{prox}_{\eta^{-1}f}
\quad\text{and}\quad
\operatorname{prox}_{\eta^{-1}g},
\]
while still converging to a minimizer of the original convex problem.

\begin{algorithm}[htbp]
    \SetAlgoLined

    \KwIn{\(\bm Z_{1}\), \(\bm A\), \(\bm B\), \(a_1,a_2 \geqslant 0\), \(\eta > 0\), \(\alpha \in (0, 1]\), \(\text{maxit}\)}

    \KwOut{\(\bm X_\text{maxit}\)}

    \For{\(k=1:\text{maxit}\)}{

        \(\bm X_k \leftarrow \arg\min_{\bm X} f(\bm X) + \frac{\eta}{2}\norm{\bm X - \bm Z_{k}}_{F}^2\) \tcp*{first proximal operator}

        \(\bm Y_k \leftarrow \arg\min_{\bm Y} g(\bm Y) + \frac{\eta}{2}\norm{\bm Y - 2\bm X_{k} + \bm Z_{k}}_{F}^2\) \tcp*{second proximal operator}

        \(\bm Z_{k+1} \leftarrow \bm Z_{k} + 2\alpha\left(\bm Y_{k} - \bm X_{k}\right)\)\;
    }

    \caption{DR for convex problem}
    \label{DR for convex problem}
\end{algorithm}

The proximal operator of \(f\) has a simple closed form. Since \(f\) is a quadratic plus an indicator function for the set \(\{\bm X \mid \bm X \geqslant -\bm A\}\), the problem is separable and the proximal operator is an elementwise clipping:
\begin{equation}
    \arg\min_{\bm X} f(\bm X) + \frac{\eta}{2}\norm{\bm X - \bm Z_{k}}_{F}^2
    =\pi_{+}\!\left(\frac{\eta}{a_{1} + \eta}\bm Z_{k}\right),
\end{equation}
where
\begin{equation}
    \pi_{+}(\bm Z)_{ij} = \max\B{-A_{ij}, Z_{ij}}.
\end{equation}

\subsubsection{The proximal operator of the orthogonality-constrained squared nuclear norm}

Unlike the proximal operator of $f$, which is completely separable over the matrix entries, the proximal operator of $g$ is substantially more challenging. The difficulty arises from two sources. First, the squared nuclear norm
\[
\|\bm Y\|_*^2
=
\left(\sum_i\sigma_i(\bm Y)\right)^2
\]
couples all singular values, so the optimization problem is no longer separable. Second, the orthogonality constraint
\[
\bm Y\bm B=\bm0
\]
introduces a linear constraint on the admissible right singular vectors. Consequently, unlike the proximal operator of $f$, no elementwise solution is available.
Our strategy is therefore different. Rather than solving the minimization problem directly, we first derive its first-order optimality condition, identify its structure, and then show that this condition is equivalent to a modified singular-value thresholding problem. This ultimately leads to a closed-form characterization of the proximal operator.

The optimality condition for
\begin{equation}\label{Proximal2-0}
    \bm Y = \arg\min_{\bm Y} g(\bm Y) + \frac{\eta}{2}\norm{\bm Y - \bm M}_{F}^2
\end{equation}
is
\begin{equation}\label{Proximal2-1}
    \begin{cases}
        \bm 0 \in \partial \norm{\bm Y}_{\ast}^2 + (a_{2}+\eta)\bm Y - \eta\bm M + \B{\bm\Lambda\bm B^T \mid \bm\Lambda \in \R^{N_x \times 3}}, \\
        \bm Y\bm B = \bm 0.
    \end{cases}
\end{equation}

We first state a lemma for the subgradient of the squared nuclear norm.
\begin{lemma}\label{lem:squared-nuclear-subgradient}
    \(2\norm{\bm X}_{\ast}\partial\norm{\bm X}_{\ast} \subset \partial \norm{\bm X}_{\ast}^2\).
    \begin{proof}
        For each \(\bm Z \in \partial\norm{\bm X}_{\ast}\), we have \(\norm{\bm Y}_{\ast} - \norm{\bm X}_{\ast} \geqslant \left\langle \bm Z, \bm Y - \bm X \right\rangle\) for all \(\bm Y\). Then
        \[\begin{aligned}
            \norm{\bm Y}_{\ast}^2 - \norm{\bm X}_{\ast}^2
            - 2\norm{\bm X}_{\ast}\left\langle \bm Z, \bm Y - \bm X \right\rangle
            &\geqslant \norm{\bm Y}_{\ast}^2 - \norm{\bm X}_{\ast}^2
            - 2\norm{\bm X}_{\ast}\left(\norm{\bm Y}_{\ast} - \norm{\bm X}_{\ast}\right) \\
            &= \left(\norm{\bm Y}_{\ast} - \norm{\bm X}_{\ast}\right)^2 \geqslant 0.
        \end{aligned}\]
       Therefore, \(2\norm{\bm X}_{\ast}\bm Z \in \partial \norm{\bm X}_{\ast}^2\).
    \end{proof}
\end{lemma}

\begin{definition}
    For any matrix \(\bm O\in\R^{n\times k}\) (\(k\leqslant n\)) with orthonormal columns, we define the orthogonal projection and its complement as
    \begin{equation}
        \bm\Pi_{\bm O} = \bm O\bm O^T, \qquad
        \bm\Pi_{\bm O}^{\perp} = \bm I - \bm O\bm O^T.
    \end{equation}
\end{definition}

The next theorem characterizes the proximal operator of \(g\). It shows that the proximal operator can be expressed in terms of the singular value soft thresholding operator \(D_{\tau}(\cdot)\), which subtracts \(\tau\) from each singular value and discards the non-positive ones.

\begin{theorem}\label{thm:proximal}
    Suppose the compact SVD of the minimizer \(\bm Y\) of \eqref{Proximal2-0} is \(\bm Y = \bm U\bm\Sigma\bm V^T\). Then \(\bm Y\) satisfies
    \begin{equation}\label{Proximal2---Implicit}
        \bm Y = D_{\frac{2\norm{\bm Y}_{\ast}}{a_{2}+\eta}}\left(\frac{\eta}{a_{2}+\eta}\bm M \bm\Pi_{\bm B}^{\perp}\right),
    \end{equation}
    where \(D_{\tau}(\cdot)\) is the singular value soft thresholding operator with threshold \(\tau\).

    \begin{proof}
        Since \(g\) is strongly convex, problem \eqref{Proximal2-0} has a unique minimizer. We construct a candidate \(\bm Y\) via \eqref{Proximal2---Implicit} and verify that it satisfies the first-order optimality condition \eqref{Proximal2-1}. By Lemma~\ref{lem:squared-nuclear-subgradient} and Proposition~\ref{prop:subgradient}, \(\bm Y\) satisfies \eqref{Proximal2-1} if the following system admits a solution:
        \begin{equation}
            \begin{aligned}
                & \exists \bm Z \in\R^{N_{x}\times 3}, \bm W \in\B{\bm W\in\R^{N_x\times N_v} | \bm U^T\bm W = \bm 0, \bm W\bm V = \bm 0, \norm{\bm W}_2 \leqslant 1}: \\
                & \begin{cases}
                    \bm 0 = 2\norm{\bm\Sigma}_{\ast}\bm U\bm V^T + 2\norm{\bm\Sigma}_{\ast}\bm W + \bm Z \bm B^T + (a_{2}+\eta)\bm U\bm\Sigma\bm V^T - \eta\bm M, \\
                    \bm V^T \bm B = \bm 0, 
                \end{cases}
            \end{aligned}
        \end{equation}
        which can be rewritten as 
        \begin{equation}
            \begin{aligned}
                & \exists \bm Z \in\R^{N_{x}\times 3}, \bm W \in\B{\bm W\in\R^{N_x\times N_v} | \bm U^T\bm W = \bm 0, \bm W\bm V = \bm 0, \norm{\bm W}_2 \leqslant 1}: \\
                & \begin{cases}
                    \frac{\eta}{a_{2} + \eta}\bm M = \bm U \left(\bm\Sigma + \frac{2\norm{\bm\Sigma}_{\ast}}{a_{2} + \eta}\right) \bm V^T + \frac{2\norm{\bm\Sigma}_{\ast}}{a_{2} + \eta}\bm W + \bm Z\bm B^T, \\
                    \bm V^T \bm B = \bm 0. 
                \end{cases}
            \end{aligned}
        \end{equation}

        We now show that this system admits a solution. This proves that the candidate \(\bm Y\) constructed via \eqref{Proximal2---Implicit} satisfies the optimality condition and is therefore the unique minimizer.
        
        Let \(\tau = \frac{2\norm{\bm\Sigma}_{\ast}}{a_{2}+\eta}\), then
        \begin{equation}
            \begin{aligned}
                \frac{\eta}{a_{2} + \eta}\bm M
                &= \bm U \left(\bm\Sigma + \tau\right) \bm V^T + \tau\bm W + \bm Z\bm B^T \\
                &= \bm U \left(\bm\Sigma + \tau\right) \bm V^T + \tau\bm W(\bm I-\bm B\bm B^T) + \left(\bm Z + \tau\bm W\bm B\right)\bm B^T \\
                &= \begin{pmatrix}
                    \bm U & \widetilde{\bm U}
                \end{pmatrix}
                \begin{pmatrix}
                    \bm\Sigma + \tau & \bm 0\\
                    \bm 0 & \tau\widetilde{\bm\Sigma}
                \end{pmatrix}
                \begin{pmatrix}
                    \bm V^T \\\widetilde{\bm V}^T
                \end{pmatrix} + \widetilde{\bm Z}\bm B^T. 
            \end{aligned}
        \end{equation}
        Here, \(\widetilde{\bm Z} = \bm Z + \tau\bm W\bm B \in \R^{N_x \times 3}\) is arbitrary. The matrix \(\widetilde{\bm W} = \bm W \bm\Pi_{\bm B}^{\perp}\), with compact SVD \(\widetilde{\bm W} = \widetilde{\bm U}\widetilde{\bm \Sigma}\widetilde{\bm V}^T\), satisfies
        \begin{subequations}
            \begin{eqnarray}
                & \bm U^T\bm W = \bm 0 \implies \bm U^T\widetilde{\bm U} = \bm 0, \\
                & \bm W\bm V = \bm 0 \implies \widetilde{\bm V}^T \bm V = \bm 0, \\
                & \bm\Pi_{\bm B}^{\perp}\bm B = \bm 0 \implies \widetilde{\bm V}^T \bm B = \bm 0, \\
                & \norm{\bm W^T \bm x}_2^2
                = \norm{(\bm I - \bm B\bm B^T)\bm W^T \bm x}_2^2 + \norm{\bm B\bm B^T \bm W^T \bm x}_2^2
                \geqslant \norm{\widetilde{\bm W}^T\bm x}_2^2 \implies \norm{\widetilde{\bm W}}_2 \leqslant 1. 
            \end{eqnarray}
        \end{subequations}
        Therefore, such a decomposition exists. The equation
        \begin{equation}
            \frac{\eta}{a_{2}+\eta}\bm M \bm\Pi_{\bm B}^{\perp} 
            = \begin{pmatrix}
                \bm U & \widetilde{\bm U}
            \end{pmatrix}
            \begin{pmatrix}
                \bm\Sigma + \tau & \bm 0\\
                \bm 0 & \tau\widetilde{\bm\Sigma}
            \end{pmatrix}
            \begin{pmatrix}
                \bm V^T \\\widetilde{\bm V}^T
            \end{pmatrix}
        \end{equation}
        gives a compact SVD. Using the fact that \(\norm{\widetilde{\bm\Sigma}}_{2} = \norm{\widetilde{\bm W}}_2 \leqslant 1\), we arrive at equation \eqref{Proximal2---Implicit}.
    \end{proof}
\end{theorem}

\begin{remark}[Efficient computation of the threshold]\label{rem:bisection}
    Although equation \eqref{Proximal2---Implicit} defines \(\bm Y\) implicitly, the threshold \(\tau\) can be computed efficiently. Suppose we have computed the SVD \(\frac{\eta}{a_{2}+\eta}\bm M(\bm I - \bm B\bm B^T) = \widehat{\bm U}\widehat{\bm\Sigma}\widehat{\bm V}^T\) with \(\widehat{\bm\Sigma} = \diag\left(\sigma_1,\cdots,\sigma_n\right)\). Then \(\tau\) satisfies
    \begin{equation}\label{eq:bisection}
        0 = (a_{2}+\eta)\tau - 2\sum_{k=1}^{n} \max\B{\sigma_k - \tau, 0}.
    \end{equation}
    The right-hand side is a convex, monotonically increasing, piecewise linear function of \(\tau\) with breakpoints at \(\sigma_1, \ldots, \sigma_n\). The root can be located by binary search over the breakpoints in \(\mathcal{O}(\log n)\) comparisons, followed by an explicit solve within the linear piece, for a total cost of \(\mathcal{O}(n\log n)\) floating point operations (dominated by sorting the singular values). The SVD itself is only computed once and is the dominant computational cost. A partial SVD, computing only the singular values larger than $\tau$, could be used, but may need to be later expanded since the value of $\tau$ is not known in advance, so it is not particularly efficient.
\end{remark}

\subsection{Restarted dual FISTA for the convex problem}\label{sec:FISTA}

The Douglas--Rachford splitting works directly on the primal problem, but requires computing the proximal operator of \(g\) at each iteration, which involves a full SVD (cf.\ Theorem~\ref{thm:proximal} and Remark~\ref{rem:bisection}).   An alternative is to work on the dual problem, where the structure of the conjugate functions may lead to cheaper iterations. Since \(f\) and \(g\) are both strongly convex (with parameters \(a_1\) and \(a_2\) respectively), their convex conjugates \(f^{\ast}\) and \(g^{\ast}\) have Lipschitz continuous gradients, making the dual problem amenable to gradient-based methods.

The FISTA algorithm \cite{doi:10.1137/080716542} uses a Nesterov-type \cite{nesterov1983method} momentum method to accelerate the convergence of proximal gradient descent from \(\mathcal{O}(1/k)\) to \(\mathcal{O}(1/k^{2})\), where \(k\) is the iteration number. FISTA can be further accelerated by adaptive restart strategies \cite{O'Donoghue2015, beck2017first}. The approach of applying restarted FISTA to a dual problem for enforcing nonnegativity constraints was studied in \cite{chen2025nonneg}. We apply the same idea to the dual problem of the splitting formulation \eqref{the splitting},
\begin{equation}\label{splitting dual formulation}
    \begin{aligned}
        & \min_{\bm\Lambda} F(\bm\Lambda),  \\
        & \text{where } F(\bm\Lambda) = \B{f^{\ast}(-\bm\Lambda) + g^{\ast}(\bm\Lambda)}, 
    \end{aligned}
\end{equation}
where \(f^{\ast}\) and \(g^{\ast}\) are convex conjugates of \(f\) and \(g\) respectively, and \(\bm\Lambda \in \R^{m \times n}\) is the dual variable. Here, we require \(a_{1} > 0\) and \(a_{2} > 0\). 

Since \(f\) is \(a_{1}\)-strongly convex and \(g\) is \(a_{2}\)-strongly convex, the conjugate functions \(f^{\ast}\) and \(g^{\ast}\) are differentiable with \(\frac{1}{a_{1}}\)-Lipschitz and \(\frac{1}{a_{2}}\)-Lipschitz gradients, respectively. In particular, the dual cost function \(F(\bm\Lambda) = f^{\ast}(-\bm\Lambda) + g^{\ast}(\bm\Lambda)\) takes finite values and has Lipschitz continuous gradients.

We first write down \(f^{\ast}\):
\begin{equation}
    \begin{aligned}
        f^{\ast}(\bm\Lambda) 
        =& \sup_{\bm X \geqslant -\bm A} \left\langle \bm\Lambda, \bm X \right\rangle - \frac{a_{1}}{2} \|\bm X\|_{F}^{2} \\
        =& \left( \left\langle \bm\Lambda, \bm X \right\rangle - \frac{a_{1}}{2} \|\bm X\|_{F}^{2} \right)\bigg|_{\bm X = \max\{-\bm A, a_{1}^{-1}\bm\Lambda\}}, 
    \end{aligned}
\end{equation}
since the problem is separable, 
and using a similar procedure as the derivation of \(\mathrm{prox}_{\eta^{-1}g}\), we have
\begin{equation}
    g^{\ast}(\bm\Lambda) = \left(\left\langle \widetilde{\bm\sigma}, \bm\sigma \right\rangle - \|\bm\sigma\|_{\ell^{1}}^{2} - \frac{a_{2}}{2} \|\bm\sigma\|_{\ell^{2}}^{2}\right)\bigg|_{\bm\sigma = D_{\frac{2\|\bm\sigma\|_{\ell^{1}}}{a_{2}}}(\frac{\widetilde{\bm\sigma}}{a_{2}})},
\end{equation}
where \(\widetilde{\bm\sigma}\) is the singular value vector of \(\bm\Lambda\Pi_{\bm B}^{\perp}\) and $\|\bm\sigma\|_{\ell^{1}}$ is the sum over all the absolute values of the entries of $\bm\sigma$.

Since both \(f^{\ast}\) and \(g^{\ast}\) are smooth, the proximal-gradient (FISTA) iteration can be set up in two symmetric ways: take the gradient of one conjugate and the proximal operator of the other. We describe both and call them \emph{variant~I} (gradient of \(f^{\ast}\), proximal operator of \(g^{\ast}\)) and \emph{variant~II} (gradient of \(g^{\ast}\), proximal operator of \(f^{\ast}\)).

\paragraph{Variant I (gradient of \(f^{\ast}\), proximal operator of \(g^{\ast}\)).}
This variant requires the proximal operator of \(g^{\ast}\). Since \(g\) is \(a_2\)-strongly convex, \(g^{\ast}\) has \(\frac{1}{a_2}\)-Lipschitz gradient, and \(\mathrm{prox}_{\eta_k g^{\ast}}\) can be evaluated using the Moreau decomposition \(\mathrm{prox}_{\eta_k g^{\ast}}(\bm Z) = \bm Z - \eta_k \mathrm{prox}_{\eta_k^{-1} g}(\eta_k^{-1}\bm Z)\), where \(\mathrm{prox}_{\eta_k^{-1} g}\) is the proximal operator computed in Theorem~\ref{thm:proximal}. The iteration is
\begin{subequations}
    \begin{eqnarray}
        && \bm\Lambda_{k+1} = \mathrm{prox}_{\eta_{k} g^{\ast}}\left( \bm\Theta_{k} + \eta_{k} \nabla f^{\ast}(-\bm\Theta_{k}) \right), \\
        && t_{k+1} = \frac{1 + \sqrt{1 + 4 t_{k}^2}}{2}, \\
        && \bm\Theta_{k+1} = \bm\Lambda_{k+1} + \frac{t_{k}-1}{t_{k+1}}(\bm\Lambda_{k+1} - \bm\Lambda_{k}),
    \end{eqnarray}
\end{subequations}
where \(\bm\Lambda_{k}\) converges fast to \(\bm\Lambda\), and the initial condition is taken to be \(\bm\Theta_{0} = \bm\Lambda_{0}\) and \(t_{0} = 1\). We require \(\eta_{k} \leqslant \frac{1}{L_{f^{\ast}}} = a_{1}\) for stability.

Since this algorithm operates in the dual space, we need to recover the primal variable \(\bm X\) from \(\bm\Lambda\). We use \(\bm X = \nabla f^{\ast}(-\bm\Lambda)\), which always satisfies the nonnegativity condition \(\bm X \geqslant -\bm A\) (but not necessarily the orthogonality constraint since the iteration never \emph{exactly} reaches optimality). By the Fenchel--Young equality,
\[
    \left\langle \bm\Lambda, \bm X \right\rangle = f(\bm X) + f^{\ast}(\bm\Lambda) 
    \iff \bm X \in \partial f^{\ast}(\bm\Lambda), 
\]
the primal variable is given by
\begin{equation}\label{gradient of f*, recover primal variable}
    \bm X = \nabla f^{\ast}(-\bm\Lambda)
    = \max\{-\bm A, -a_{1}^{-1}\bm\Lambda\}.
\end{equation}

\paragraph{Variant II (gradient of \(g^{\ast}\), proximal operator of \(f^{\ast}\)).}
Here we instead take the gradient of \(g^{\ast}\) and the proximal operator of \(f^{\ast}\). The iteration is
\begin{subequations}
    \begin{eqnarray}
        && \bm\Lambda_{k+1} = \mathrm{prox}_{\eta_{k}\, [f^{\ast}(-\,\cdot\,)]}\left( \bm\Theta_{k} - \eta_{k} \nabla g^{\ast}(\bm\Theta_{k}) \right), \\
        && t_{k+1} = \tfrac{1}{2}\left(1 + \sqrt{1 + 4 t_{k}^2}\right), \quad
        \bm\Theta_{k+1} = \bm\Lambda_{k+1} + \tfrac{t_{k}-1}{t_{k+1}}(\bm\Lambda_{k+1} - \bm\Lambda_{k}),
    \end{eqnarray}
\end{subequations}
with step size \(\eta_{k} \leqslant \frac{1}{L_{g^{\ast}}} = a_{2}\). The gradient \(\nabla g^{\ast}(\bm\Lambda) = D_{\frac{2\|\bm X\|_{\ast}}{a_2}}(a_2^{-1}\bm\Lambda\bm\Pi_{\bm B}^{\perp})\) requires an SVD, while the proximal operator of \(f^{\ast}\) is now the cheap closed-form map. By the Moreau decomposition,
\begin{equation}
    \mathrm{prox}_{\eta\,[f^{\ast}(-\,\cdot\,)]}(\bm Z) = \bm Z + \eta\,\mathrm{prox}_{\eta^{-1} f}(-\eta^{-1}\bm Z) = \bm Z + \eta\max\B{-\bm A,\ -\tfrac{1}{a_1+\eta}\bm Z}.
\end{equation}
The recovered primal variable is now \(\bm X = \nabla g^{\ast}(\bm\Lambda) = D_{\frac{2\|\bm X\|_{\ast}}{a_2}}(a_2^{-1}\bm\Lambda\bm\Pi_{\bm B}^{\perp})\), which \emph{exactly} satisfies the orthogonality constraint \(\bm X\bm B = \bm 0\) and is low-rank, but is not necessarily nonnegative until convergence (the mirror image of variant~I, whose iterate is always nonnegative but only orthogonal at convergence).

\begin{remark}[Which variant to use]
Both variants converge to the same global minimizer of \eqref{F2-1 (homo)} and, as Table~\ref{tab:correction} shows, have comparable per-iteration cost (each needs one SVD per iteration) and nearly the same convergence rate, with variant~I slightly faster and cheaper (its primal recovery \eqref{gradient of f*, recover primal variable} is an elementwise clip rather than an SVD). They differ in which constraint the iterate satisfies exactly before convergence: variant~I keeps the nonnegativity \(\bm A + \bm X \geqslant \bm 0\), whereas variant~II keeps the orthogonality \(\bm X\bm B = \bm 0\). Since the entire purpose of the correction is to recover nonnegativity, we recommend variant~I: it returns a nonnegative iterate at every step and can therefore be stopped early while still yielding a physically admissible (nonnegative) solution, whereas variant~II may return a solution with small negative entries until it has fully converged.
\end{remark}

To eliminate the oscillation caused by the momentum term, restart strategies are employed. For strongly convex problems, a fixed restart with an optimal period produces linear convergence \cite{beck2017first}. Since our dual problem \eqref{splitting dual formulation} is not strongly convex, we also consider adaptive restart methods \cite{Nesterov2013, O'Donoghue2015}. The function-value-based and gradient-based restart criteria are
\begin{equation}\label{function-value restart}
    F(\bm\Lambda_{k}) > F(\bm\Lambda_{k-1}), 
\end{equation}
and 
\begin{subequations}\label{gradient restart}
    \begin{equation}
        \left\langle \bm G(\bm\Theta_{k-1}),\ \bm\Lambda_{k} - \bm\Lambda_{k-1} \right\rangle > 0,
    \end{equation}
    where 
    \begin{equation}
        \bm G(\bm\Theta_{k-1}) = \frac{1}{\eta_{k}}\left(\bm\Theta_{k-1} - \bm\Lambda_{k}\right).
    \end{equation}
\end{subequations}

\subsection{Restarted dual accelerated gradient descent for the convex problem}\label{sec:EAGD}

If both \(a_{1}\) and \(a_{2}\) are positive, the dual cost function \(F\) in \eqref{splitting dual formulation} has Lipschitz continuous gradient with Lipschitz constant \(L = \frac{1}{a_{1}} + \frac{1}{a_{2}}\). In this case, we use a fully explicit accelerated gradient descent method without proximal operators. We apply Nesterov's accelerated gradient descent \cite{nesterov1983method} to the dual problem and use the same restarting strategies as in the previous subsection.

The gradient of the dual cost function \(F\) is 
\begin{equation}
    \nabla F(\bm\Lambda) 
    = -\nabla f^{\ast}(-\bm\Lambda) + \nabla g^{\ast}(\bm\Lambda), 
\end{equation}
where \(\nabla f^{\ast}\) is given by \eqref{gradient of f*, recover primal variable} and \(\nabla g^{\ast}\) is given by 
\begin{equation}\label{gradient of g*}
    \bm X = \nabla g^{\ast}(\bm\Lambda)
    = D_{\frac{2\norm{\bm X}_{\ast}}{a_{2}}}(\frac{1}{a_{2}}\bm\Lambda \bm\Pi_{\bm B}^{\perp}).
\end{equation}
The accelerated gradient iteration is
\begin{eqnarray}
    && \bm\Lambda_{k+1} = \bm\Theta_{k} - \eta_{k} \left(\nabla g^{\ast}(\bm\Theta_{k}) - \nabla f^{\ast}(-\bm\Theta_{k})\right), \\
    && t_{k+1} = \frac{1 + \sqrt{1 + 4 t_{k}^2}}{2}, \\
    && \bm\Theta_{k+1} = \bm\Lambda_{k+1} + \frac{t_{k}-1}{t_{k+1}}(\bm\Lambda_{k+1} - \bm\Lambda_{k}),
\end{eqnarray}
where \(\bm\Lambda_{k}\) converges to \(\bm\Lambda\), and the initial condition is \(\bm\Theta_{0} = \bm\Lambda_{0}\) and \(t_{0} = 1\). We require \(\eta_{k} \leqslant \frac{1}{L_{F}} = \frac{a_{1} a_{2}}{a_{1} + a_{2}}\) for stability. Similar to the FISTA, our restarting strategies include fixed restart, function-value-based restart \eqref{function-value restart}, and gradient-based restart \eqref{gradient restart}.

\subsection{Dual PR+ conjugate gradient for the convex problem}\label{sec:CG}

We next apply the nonlinear conjugate gradient method to the dual problem \eqref{splitting dual formulation}. We use the Polak--Ribi\`{e}re plus (PR+) variant \cite{doi:10.1137/1028154} with an explicit step size formula from \cite{Sun2001} to avoid line search. The algorithm is
\begin{eqnarray}
    && \bm G_{k} = \nabla F(\bm\Lambda_{k}), \\
    && \beta_{k} = \begin{cases}
        0, & k = 1, \\
        \frac{\left\langle \bm G_{k}, \bm G_{k} - \bm G_{k-1} \right\rangle}{\norm{\bm G_{k-1}}_{F}^{2}}, & k > 1, 
    \end{cases} \\
    && \bm D_{k} = \beta_{k} \bm D_{k-1} - \bm G_{k}, \\
    && \alpha_{k} = -\frac{\delta \left\langle \bm G_{k}, \bm D_{k} \right\rangle}{\norm{\bm D_{k}}_{F}^{2}} \\
    && \bm\Lambda_{k+1} = \bm\Lambda_{k} + \alpha_{k}\bm D_{k}. 
\end{eqnarray}
Here, according to \cite{Sun2001}, we should require \(\delta \in (0, \frac{1}{L_{F}})\) for stability.

\subsection{Dual L-BFGS for the convex problem}\label{sec:LBFGS}

The L-BFGS algorithm \cite{liu1989limited} is a quasi-Newton method that uses a limited number of past iterates to approximate the inverse Hessian, requiring only gradient evaluations. Applied to the dual problem \eqref{splitting dual formulation}, the algorithm is
\begin{eqnarray}
    && \bm G_{k} = \nabla F(\bm\Lambda_{k}), \\
    && \bm D_{k} = -\bm H_{k} \bm G_{k}, \\
    && \alpha_{k} = \arg\min_{\alpha \geqslant 0} F(\bm\Lambda_{k} + \alpha \bm D_{k}), \\
    && \bm\Lambda_{k+1} = \bm\Lambda_{k} + \alpha_{k} \bm D_{k}, \\
    && \begin{aligned}
        \bm H_{k+1} = & \left(\prod_{j=k-p+1}^{k} \bm J_{j}\right)^{T} \bm H_{0}^{k} \prod_{j=k-p+1}^{k} \bm J_{j} \\& + \sum_{i=1}^{p} \gamma_{k-p+i} \left(\prod_{j=k-p+i+1}^{k} \bm J_{j}\right)^{T} \bm s_{k-p+i} \bm s_{k-p+i}^{T} \prod_{j=k-p+i+1}^{k} \bm J_{j}.
    \end{aligned}
\end{eqnarray}
Here, the iterates are assumed to be vectors, \(\bm J_{k} = \bm I - \gamma_{k}\bm y_{k}\bm s_{k}^{T}\), \(\bm y_{j} = \bm G_{j+1} - \bm G_{j}\), \(\bm s_{j} = \bm\Lambda_{j+1} - \bm\Lambda_{j}\), \(\gamma_{j} = \frac{1}{\bm y_{j}^{T} \bm s_{j}}\), and \(p\) is the number of history iterates to be stored. The matrix-vector product of \(\bm H_{k}\) can be efficiently implemented through two iterations presented in \cite{liu1989limited}. The exact line search can be replaced by a Wolfe-type inexact search.

\subsection{Tangent-space accelerated alternating projection for the non-convex problem}\label{sec:TAP}

For the rank-constrained formulation \eqref{F2-2}, we develop an alternating projection algorithm that uses the manifold structure to avoid full SVD computations. We first introduce the relevant manifold and its projection properties.

\begin{definition}
    Let \(M_{r, \bm B} = \B{\bm X\in\R^{m\times n} \big| \rank(\bm X) \leqslant r,\ \bm X\bm B = \bm 0}\) denote the set of rank-at-most-\(r\) matrices satisfying the orthogonality constraint.
\end{definition}

\begin{lemma}\label{lem:proj-MrB}
    The Frobenius-norm projection onto \(M_{r,\bm B}\) is
    \begin{equation}
        P_{M_{r,\bm B}}(\bm Y)
        = P_{M_{r}}\left(\bm Y\bm\Pi_{\bm B}^{\perp}\right), 
    \end{equation}
    where \(P_{M_{r}}(\cdot)\) is the standard rank-\(r\) SVD truncation. 
    
    \begin{proof}
        Let \(\bm Y\bm\Pi_{\bm B}^{\perp} = \bm U\bm\Sigma\bm V^T\) be a full SVD. For any \(\bm Z \in M_{r,\bm B}\), 
        \begin{equation}
            \begin{aligned}
                \norm{\bm Y - \bm Z}_{F}^2
                &= \norm{\bm Y\bm\Pi_{\bm B}}_{F}^2 + \norm{\bm Y\bm\Pi_{\bm B}^{\perp} - \bm Z}_{F}^2 \\
                &= \norm{\bm Y\bm\Pi_{\bm B}}_{F}^2 + \norm{\bm Y\bm\Pi_{\bm B}^{\perp}}_{F}^2 - 2\tr\left(\bm Z^T\bm Y\bm\Pi_{\bm B}^{\perp}\right) + \norm{\bm Z}_{F}^2 \\
                &= \norm{\bm Y\bm\Pi_{\bm B}}_{F}^2 + \norm{\bm\Sigma - \widehat{\bm Z}}_{F}^2 \\
                &\geqslant \norm{\bm Y\bm\Pi_{\bm B}}_{F}^2 +
                \sum_{k = r+1}^{n} \Sigma_{k,k}^2,
            \end{aligned}
        \end{equation}
        where \(\widehat{\bm Z} = \bm U^T\bm Z\bm V\) has rank at most \(r\). The inequality is the Eckart--Young--Mirsky theorem: since \(\Sigma_{k,k}\) are the singular values of \(\bm\Sigma\), the best rank-\(r\) Frobenius approximation retains the largest \(r\) of them, with equality if and only if \(\widehat{\bm Z} = P_{M_r}(\bm\Sigma)\), i.e. \(\bm Z = P_{M_{r}}\left(\bm Y\bm\Pi_{\bm B}^{\perp}\right)\). It is straightforward to check that \(\bm Z = P_{M_{r}}\left(\bm Y\bm\Pi_{\bm B}^{\perp}\right) \in M_{r,\bm B}\). 
    \end{proof}
\end{lemma}

We first consider the alternating projection algorithm that projects onto the nonnegative set and onto \(M_{r,\bm B}\), as shown in Algorithm~\ref{AP for non-convex problem}.

\begin{algorithm}[htbp]
    \SetAlgoLined

    \KwIn{\(\bm X_0\), \(\bm A\), \(\bm B\), \(r\), \(\text{maxit}\)}

    \KwOut{\(\bm X_{\text{maxit}+1}\)}

    \For{\(k=1:\text{maxit}\)}{

        \(\bm Y_{k} = \pi_{+}\left(\bm X_k\right)\) \tcp*{element-wise}

        \(\bm X_{k+1} = P_{M_{r,\bm B}}\left(\bm Y_{k}\right)\) \tcp*{requires SVD}
    }

    \caption{Alternating projection (AP) for the non-convex problem}
    \label{AP for non-convex problem}
\end{algorithm}

The expensive step is the projection \(P_{M_{r,\bm B}}\), which requires a full SVD. Following \cite{song2020tangent}, we replace this projection by first projecting onto the tangent space of \(M_{r,\bm B}\) at the current iterate \(\bm X_k\) and then truncating to rank \(r\). This reduces the SVD cost from a full \(m \times n\) SVD to a \(2r \times 2r\) SVD. We now introduce the tangent-space machinery. In the following, we assume that the iterates \(\bm X_k\) have exact rank \(r\), so that the tangent space of the fixed-rank manifold is well-defined. This assumption holds generically in practice, especially since we choose $r$ to be small.

\begin{lemma}
    The tangent space of the fixed-rank manifold \(M_{r}\) at \(\bm X = \bm U\bm\Sigma\bm V^T \in M_{r}\) (compact SVD form, with \(\bm\Sigma\) having all positive diagonal entries) is
    \begin{equation}
        \begin{aligned}
            & T_{M_{r}}(\bm X) \\
            &= \B{\bm U\bm W^T + \bm Z \bm V^T \big|\bm W\in\R^{n\times r}, \bm Z \in\R^{m\times r}} \\
            &= \B{\bm U\bm M\bm V^T + \bm U_{p}\bm V^T + \bm U\bm V_{p}^T \big| \bm U_{p}^T\bm U = \bm 0, \bm V_{p}^T\bm V = \bm 0, \bm M\in\R^{r\times r}, \bm U_{p}\in\R^{m\times r}, \bm V_{p}\in\R^{n\times r}} \\
            &= \B{\begin{pmatrix}
                \bm U & \bm U^{\perp}
            \end{pmatrix}
            \begin{pmatrix}
                \bm C_1 & \bm C_2 \\
                \bm C_3 & \bm 0
            \end{pmatrix}
            \begin{pmatrix}
                \bm V & \bm V^{\perp}
            \end{pmatrix}^T \bigg| \bm C_1 \in\R^{r\times r}, \bm C_2\in\R^{r\times(n-r)}, \bm C_3\in\R^{(m-r)\times r}}. 
        \end{aligned}
    \end{equation}

    \begin{proof}
        See \cite{zheng2022riemannian} and \cite{vandereycken2013low}.
    \end{proof}
\end{lemma}

\begin{lemma}\label{lemma:projTmr}
    The least square projection (in Frobenius norm) onto \(T_{M_r}(\bm X)\) is 
    \begin{equation}
        P_{T_{M_r}(\bm X)}(\bm Y)
        = \bm\Pi_{\bm U}\bm Y + \bm Y \bm\Pi_{\bm V} - \bm\Pi_{\bm U}\bm Y\bm\Pi_{\bm V}, 
    \end{equation}
    where \(\bm X = \bm U\bm\Sigma\bm V^T\) is a rank-\(r\) SVD.
    
    \begin{proof}
        See \cite{zheng2022riemannian} and \cite{vandereycken2013low}.
    \end{proof}
\end{lemma}
Note that these computations can be done efficiently if the matrix multiplication is done in the right order; e.g., $\bm\Pi_{\bm U}\bm Y = \bm U\bm U^T \bm Y$ is calculated as $\bm U \left( \bm U^T \bm Y\right)$, and the final outer multiplication is not done explicitly but left in factored form (see Remark~\ref{rmk:factored}).

\begin{lemma}
    The tangent space of \(M_{r, \bm B}\) at \(\bm X = \bm U\bm\Sigma\bm V^T \in M_{r, \bm B}\) (compact SVD form) is 
    \begin{equation}
        \begin{aligned}
            & T_{M_{r,\bm B}}(\bm X) \\
            &= \B{\bm U\bm W^T + \bm Z \bm V^T \big|\bm W\in\R^{n\times r}, \bm Z \in\R^{m\times r}, \bm W^T\bm B = \bm 0} \\
            &= \B{\bm U\bm M\bm V^T + \bm U_{p}\bm V^T + \bm U\bm V_{p}^T \bm\Pi_{\bm B}^{\perp} \big| \bm U_{p}^T\bm U = \bm 0, \bm V_{p}^T\bm V = \bm 0, \bm M\in\R^{r\times r}, \bm U_{p}\in\R^{m\times r}, \bm V_{p}\in\R^{n\times r}}. 
        \end{aligned}
    \end{equation}

    \begin{proof}
        Since \(\bm X \in M_{r,\bm B}\), we have \(\bm V^T\bm B = \bm 0\). The orthogonality constraint \(\bm W^T\bm B = \bm 0\) follows from differentiating \(\bm X\bm B = \bm 0\) along the manifold.
    \end{proof}
\end{lemma}

\begin{lemma}
    The least square projection (in Frobenius norm) onto \(T_{M_r,\bm B}(\bm X)\) is 
    \begin{equation}
        P_{T_{M_{r,\bm B}}(\bm X)}(\bm Y)
        = P_{T_{M_r}(\bm X)}(\bm Y)\bm\Pi_{\bm B}^{\perp}
        = P_{T_{M_r}(\bm X)}\left(\bm Y\bm\Pi_{\bm B}^{\perp}\right). 
    \end{equation}

    \begin{proof}
        Since \(\bm X \in M_{r,\bm B}\), we have \(\bm V^T\bm B = \bm 0\). As a result, there exists a matrix \(\bm K\) such that the matrix \(\begin{pmatrix}
            \bm V & \bm B & \bm K
        \end{pmatrix}\) is orthonormal. Using matrix \(\bm K\), we can rewrite the tangent space as 
        \begin{equation}
            \begin{aligned}
                &T_{M_{r,\bm B}}(\bm X) \\
                &= \B{\begin{pmatrix}
                    \bm U & \bm U^{\perp}
                \end{pmatrix}
                \begin{pmatrix}
                    \bm C_1 & \bm 0 & \bm C_2 \\
                    \bm C_3 & \bm 0 & \bm 0
                \end{pmatrix}
                \begin{pmatrix}
                    \bm V & \bm B & \bm K
                \end{pmatrix}^T \bigg| \bm C_1\in\R^{r\times r}, \bm C_2 \in\R^{r\times(n-r-3)}, \bm C_3 \in\R^{(m-r)\times r}}. 
            \end{aligned}
        \end{equation}
        Then, the least square projection is given by 
        \begin{equation}
            \begin{aligned}
                P_{T_{M_{r,\bm B}}(\bm X)}(\bm Y)
                &= \bm\Pi_{\bm U}\bm Y\bm\Pi_{\bm V} + \bm\Pi_{\bm U}^{\perp} \bm Y \bm\Pi_{\bm V} + \bm\Pi_{\bm U} \bm Y \bm\Pi_{\bm K} \\
                &= \bm\Pi_{\bm U} \bm Y \bm\Pi_{\bm B}^{\perp} + \bm\Pi_{\bm U}^{\perp} \bm Y \bm\Pi_{\bm V} \\
                &= \bm\Pi_{\bm U} \bm Y \bm\Pi_{\bm B}^{\perp} + \bm\Pi_{\bm U}^{\perp} \bm Y \bm\Pi_{\bm V}\bm\Pi_{\bm B}^{\perp} = P_{T_{M_r}(\bm X)}(\bm Y)\bm\Pi_{\bm B}^{\perp},\\
                &= \bm\Pi_{\bm U} \bm Y \bm\Pi_{\bm B}^{\perp} + \bm\Pi_{\bm U}^{\perp} \bm Y \bm\Pi_{\bm B}^{\perp} \bm\Pi_{\bm V} = P_{T_{M_r}(\bm X)}\left(\bm Y\bm\Pi_{\bm B}^{\perp}\right).
            \end{aligned}
        \end{equation}
    \end{proof}
\end{lemma}

\begin{remark}\label{rmk:factored}
    Since \(\rank(P_{T_{M_{r,\bm B}}(\bm X)}(\bm Y)) \leqslant 2r\), the tangent-space projection can be stored in a low-rank factored form.
\end{remark}

Using these results, we obtain the tangent-space accelerated alternating projection (TAP) algorithm in Algorithm~\ref{TAP for non-convex problem}. At each iteration, we project \(\bm Y_{k}\) onto the tangent space of \(M_{r,\bm B}\) at \(\bm X_{k}\), and then truncate to rank \(r\). The implementation of this combined step is given in Algorithm~\ref{TP for non-convex problem}.

\begin{algorithm}[htbp]
    \SetAlgoLined

    \KwIn{\(\bm X_0\), \(\bm A\), \(\bm B\), \(r\), \(\text{maxit}\)}

    \KwOut{\(\bm X_{\text{maxit}+1}\)}

    \For{\(k=1:\text{maxit}\)}{

        \(\bm Y_{k} = \pi_{+}\left(\bm X_k\right)\) \;

        \(\bm X_{k+1} = P_{M_r}\left(P_{T_{M_{r,\bm B}}(\bm X_{k})}\left(\bm Y_{k}\right)\right)\)\tcp*{rank $r$, and SVD also computed \& stored}
    }

    \caption{TAP for non-convex problem}
    \label{TAP for non-convex problem}
\end{algorithm}

\begin{algorithm}[htbp]
    \SetAlgoLined

    \KwIn{\(\bm X_k = \bm U\bm\Sigma\bm V^T\) (rank-\(r\) compact SVD), \(\bm B\), \(\bm Y_{k}\), \(r\)}

    \KwOut{\(\bm X_{k+1}\) (in compact SVD form)}

    \(\bm Q_1 \bm R_1 = \bm\Pi_{\bm U}^{\perp}\bm Y_k\bm V\) \tcp*{compact QR}

    \(\bm Q_2 \bm R_2 = \bm\Pi_{\bm V}^{\perp}\bm Y_k^T\bm U\) \tcp*{compact QR \\by orthogonality: \(\bm U^T\bm Q_1 = \bm 0\) and \(\bm V^T \bm Q_2 = \bm 0\)}

    \(\widetilde{\bm Q} \widetilde{\bm R} = \bm\Pi_{\bm B}^{\perp}\begin{pmatrix}
        \bm V & \bm Q_2
    \end{pmatrix}\) \tcp*{compact QR}

    \(\bm E = \begin{pmatrix}
        \bm U^T \bm Y_k\bm V & \bm R_2 \\
        \bm R_1 & \bm 0
    \end{pmatrix} \widetilde{\bm R}^T\) \tcp*{\(\bm E\) is \(2r\times 2r\)}

    \(\widehat{\bm U}\widehat{\bm\Sigma}\widehat{\bm V}^T = \bm E\) \tcp*{\(2r\times 2r\) SVD \\
    rank-\(2r\) factorization: \(P_{T_{M_{r,\bm B}}(\bm X_{k})}(\bm Y_{k}) = \begin{pmatrix}
        \bm U & \bm Q_1
    \end{pmatrix}\widehat{\bm U}\,\widehat{\bm\Sigma}\,\widehat{\bm V}^T\widetilde{\bm Q}^T\)}

    Truncate to rank \(r\): keep only the top \(r\) singular triplets of \(\widehat{\bm U}\widehat{\bm\Sigma}\widehat{\bm V}^T\) to form \(\bm X_{k+1}\)

    \caption{TP operator \(P_{M_r}\left(P_{T_{M_{r,\bm B}}(\bm X)}\left(\bm Y\right)\right)\) for non-convex problem}
    \label{TP for non-convex problem}
\end{algorithm}

\begin{remark}[Computational cost]
    All iterates \(\bm X_{k}\) are stored in rank-\(r\) factored form \(\bm U\bm\Sigma\bm V^T\). The computational cost per iteration of Algorithm~\ref{TP for non-convex problem} is \(\mathcal{O}(mnr + (m+n)r^2 + r^3)\), which is significantly less than the \(\mathcal{O}(mn\min(m,n))\) cost of the full SVD required by the algorithms for the convex formulation.
\end{remark}

\section{Numerical experiments}\label{sec:numerics}

\subsection{Test problem and setup}

We test all algorithms on a 1D1V Landau damping problem discretized on an \(N_x = 64\), \(N_v = 128\) grid. The low-rank solution has \(\rank(\bm A_1) = 3\) and \(\rank(\bm A_2) = 26\). The input data are shown in Figure~\ref{1D1V data}. The scaled matrix \(\bm A\) has a negativity measure of \(\norm{\min\{\bm A, \bm 0\}}_F / \norm{\bm A}_F = 3.08\%\), which provides a lower bound on the relative correction error: any nonnegative correction must have \(\norm{\bm X}_F / \norm{\bm A}_F \geqslant 3.08\%\).

\begin{figure}[htbp]
    \begin{center}
        \includegraphics[width=\textwidth]{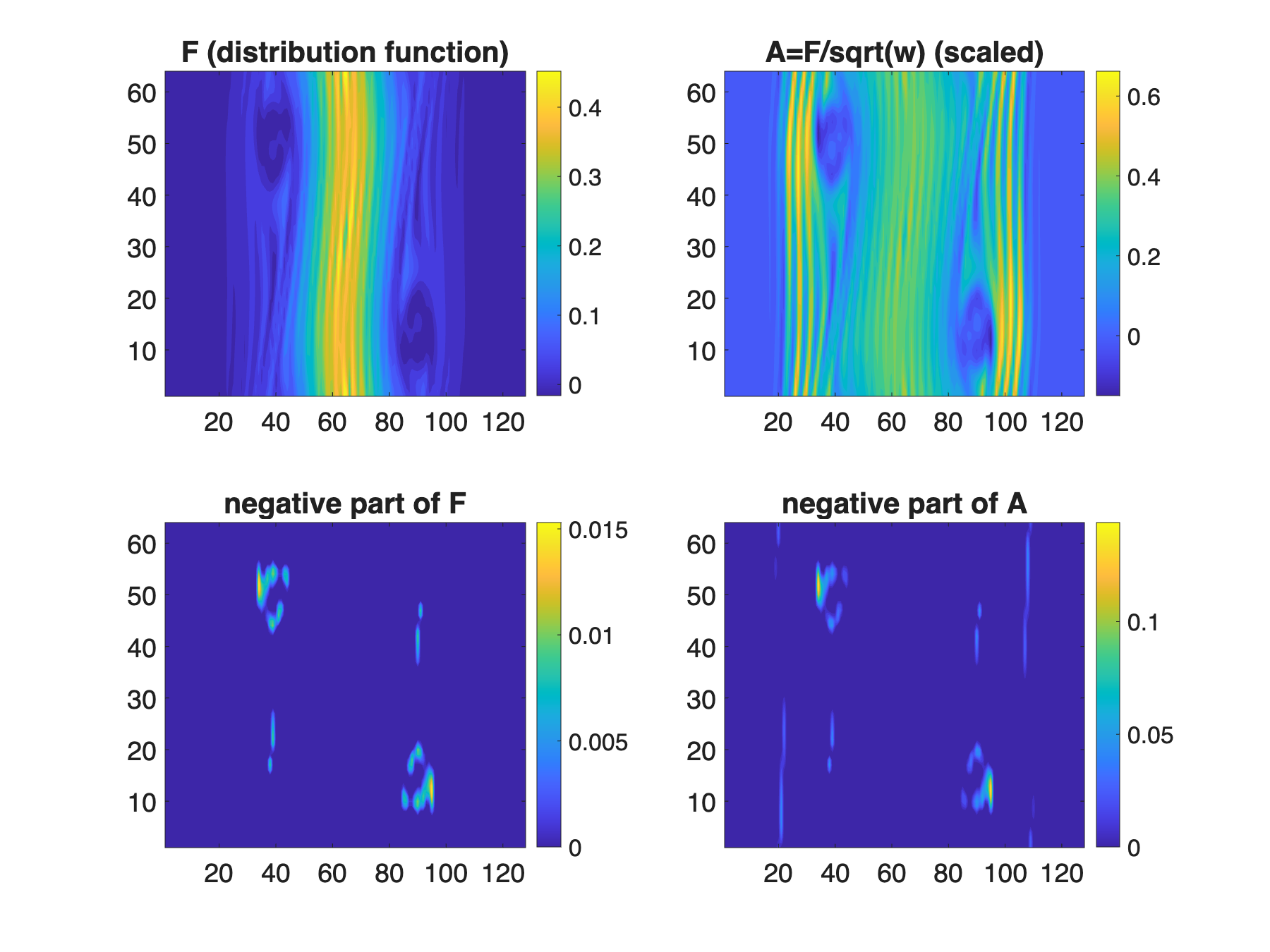}
        \caption{Input matrix \(\bm F\), scaled matrix \(\bm A\), and their negative parts. }
        \label{1D1V data}
    \end{center}
\end{figure}

For all algorithms applied to the convex formulation \eqref{F2-1 (homo)}, after experimentation (see Fig.~\ref{fig:convex-1}) we find that $a=1000$ is a good compromise between the two objectives, and henceforth set $a=1000$.
We also set \(a_1 = a_2 = a/2\), where \(a_1\) and \(a_2\) are the splitting parameters of the Douglas--Rachford decomposition \eqref{the splitting}, and note that the choice of splitting \(a = a_1 + a_2\) does not produce visible differences in the results. Each convergence plot shows the relative correction \(\norm{\bm X}_F / \norm{\bm A}_F\), the orthogonality violation \(\norm{\bm X\bm B}_F\), the rank of \(\bm X\), and the relative change between successive iterates, for several values of the parameter \(a\) or the rank constraint \(r\).

The converged relative correction \(\norm{\bm X}_F / \norm{\bm A}_F\) and the rank of \(\bm X\) depend on the penalty parameter \(a\). Computed using the Douglas--Rachford splitting with 1000 iterations, as \(a\) increases through \(1, 100, 1000, 10^4, 10^5\) the relative correction is \(4.03\%, 3.31\%, 3.29\%, 3.29\%, 3.29\%\), decreasing monotonically toward the theoretical lower bound of \(3.08\%\), while the rank of \(\bm X\) is \(38, 23, 33, 39, 45\); this reflects the trade-off between correction quality and low-rank structure controlled by \(a\). In the comparison below we fix \(a = 1000\).

Table~\ref{tab:correction} compares all algorithms on the \(64\times 128\) data, reporting the relative correction \(\norm{\bm X}_F/\norm{\bm A}_F\), the rank of \(\bm X\), the orthogonality violation \(\norm{\bm X\bm B}_F\), the minimum of the corrected solution \(\min(\bm A+\bm X)\) (nonnegativity), the number of iterations to convergence, and the total CPU time. 
Each iterative method is run until the relative change of the iterate falls below \(10^{-8}\) or a cap of \(1000\) iterations is reached. TAP uses rank \(r = 20\). For reference we also include a naive low-rank baseline: take \(\bm X = \max\{-\bm A,\bm 0\}\) (which makes \(\bm A+\bm X\) nonnegative), truncate it to rank \(r\) by an SVD, and project the result onto \(\bm\Pi_{\bm B}^{\perp}\) to enforce \(\bm X\bm B = \bm 0\). This is a direct, non-iterative construction. The baseline is essentially free and satisfies the rank and orthogonality constraints exactly, but it does not enforce nonnegativity, so \(\min(\bm A+\bm X) = -1.7\times 10^{-2} < 0\). All convex algorithms converge to the same global minimizer (relative correction \(3.29\%\), rank \(33\)). The methods that recover the primal iterate as \(\bm X = \nabla f^*(-\bm\Lambda)\) (DR, dual FISTA~I, dual AGD, dual PR+ CG, and dual L-BFGS) keep \(\min(\bm A+\bm X) = 0\) exactly at every iterate and satisfy the orthogonality constraint only in the limit. Conversely, dual FISTA~II recovers \(\bm X = \nabla g^*(\bm\Lambda)\), which is exactly orthogonal and low rank (\(\norm{\bm X\bm B}_F\) at machine precision) but only nonnegative in the limit (see Section~\ref{sec:FISTA}). The two FISTA variants converge at nearly the same rate, with variant~I slightly faster and cheaper per iteration. Douglas--Rachford splitting reaches the minimizer in the fewest iterations, whereas the dual PR+ conjugate gradient is the slowest and does not meet the tolerance within \(1000\) iterations (its iterate has not collapsed to rank \(33\) and its orthogonality violation remains \(6\times 10^{-9}\)). Although L-BFGS converges in relatively few iterations, its line search evaluates the objective several times per iteration, so its total time is larger. TAP attains a comparable correction with exact orthogonality and machine-precision nonnegativity at a lower rank and the lowest cost among the iterative methods.

\begin{table}[htbp]
\begin{center}

\begin{tabular}{lcccccc}
\toprule
method & \(\norm{\bm X}_F / \norm{\bm A}_F\) & \(\rank(\bm X)\) & \(\norm{\bm X\bm B}_F\) & \(\min(\bm A+\bm X)\) & iters & time (s) \\
\midrule
DR splitting  & \(3.29\%\) & \(33\) & \(7.4\times10^{-10}\) & \(0\)                 & \(65\)   & \(0.10\) \\
dual FISTA I  & \(3.29\%\) & \(33\) & \(3.2\times10^{-12}\) & \(0\)                 & \(201\)  & \(0.28\) \\
dual FISTA II & \(3.29\%\) & \(33\) & \(1.4\times10^{-16}\) & \(-1.9\times10^{-16}\) & \(214\)  & \(0.39\) \\
dual AGD      & \(3.29\%\) & \(33\) & \(3.8\times10^{-12}\) & \(0\)                 & \(308\)  & \(0.33\) \\
dual PR+ CG   & \(3.29\%\) & \(47\) & \(6.4\times10^{-9}\)  & \(0\)                 & \(1000\) & \(1.08\) \\
dual L-BFGS   & \(3.29\%\) & \(33\) & \(5.6\times10^{-10}\) & \(0\)                 & \(150\)  & \(0.55\) \\
TAP           & \(3.31\%\) & \(20\) & \(5.2\times10^{-17}\) & \(-1.3\times10^{-16}\) & \(45\)   & \(0.04\) \\
baseline      & \(2.93\%\) & \(20\) & \(7.4\times10^{-17}\) & \(-1.7\times10^{-2}\)  & \(1\)    & \(0.001\) \\
\bottomrule
\end{tabular}

\caption{Comparison of all algorithms on the \(64\times 128\) Landau data: relative correction, rank, orthogonality violation \(\norm{\bm X\bm B}_F\), nonnegativity \(\min(\bm A+\bm X)\), iterations to convergence, and total CPU time. Convex methods use \(a=1000\), the DR step size \(\eta=7a\) and relaxation \(\alpha=0.88\), and TAP uses \(r=20\). Convergence is declared when the relative change of the iterate drops below \(10^{-8}\), capped at \(1000\) iterations, and the baseline is a direct one-shot construction (its ``iterations'' entry is \(1\)). The convex methods reach the same minimizer. All of them except dual FISTA~II enforce nonnegativity exactly (\(\min(\bm A+\bm X)=0\) at every iterate), whereas dual FISTA~II instead enforces orthogonality exactly (\(\bm X\bm B=\bm 0\)), as discussed in Section~\ref{sec:FISTA}. PR+ conjugate gradient does not meet the tolerance within \(1000\) iterations. TAP and the baseline satisfy \(\bm X\bm B=\bm 0\) to machine precision by construction. The baseline is the fastest but violates nonnegativity, \(\min(\bm A+\bm X)<0\). CPU times are indicative and machine-dependent.}
\label{tab:correction}
\end{center}
\end{table}

\subsection{Results for the convex formulation}

\paragraph{Douglas--Rachford splitting.} We use step size \(\eta = 7a\) and relaxation parameter \(\alpha = 0.88\) (jointly tuned for the fastest convergence at \(a = 1000\), Figure~\ref{fig:convex-2}) in Algorithm~\ref{DR for convex problem}, starting from \(\bm Z_1 = \bm 0\) and running 1000 iterations. The convergence curves for different values of \(a\) are shown in Figure~\ref{fig:convex-1}(a).

\paragraph{Dual methods.} The restarted dual FISTA, restarted dual accelerated gradient descent, PR+ conjugate gradient, and L-BFGS all have comparable per-iteration cost to the Douglas--Rachford splitting. For the FISTA and accelerated gradient methods, we use step size \(\eta_k = a_1\) and test three restarting strategies: fixed restart (period 100), function-value-based restart, and gradient-based restart. Figure~\ref{fig:convex-1}(b) shows the fixed-restart FISTA and compares its two dual variants (Section~\ref{sec:FISTA}), and Figure~\ref{fig:convex-2} compares the convergence of all nine convex algorithms at the representative value \(a = 1000\).

\begin{figure}[htbp]
    \begin{center}
        \includegraphics[width=\textwidth]{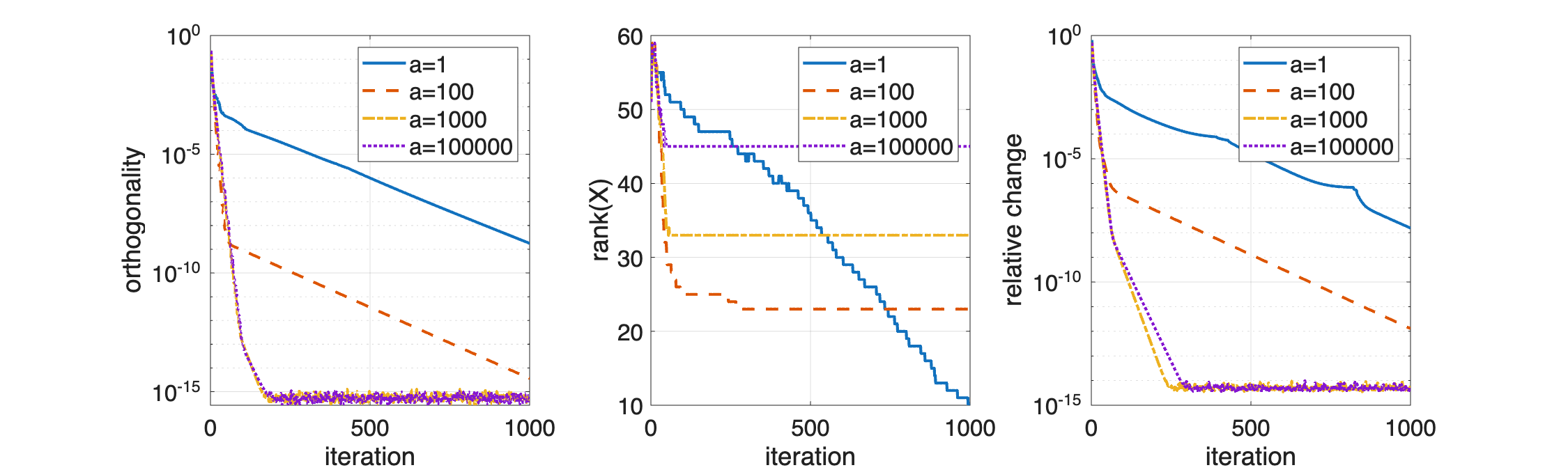}\\[2pt]
        {\small (a) Douglas--Rachford splitting} \\[6pt]
        \includegraphics[width=\textwidth]{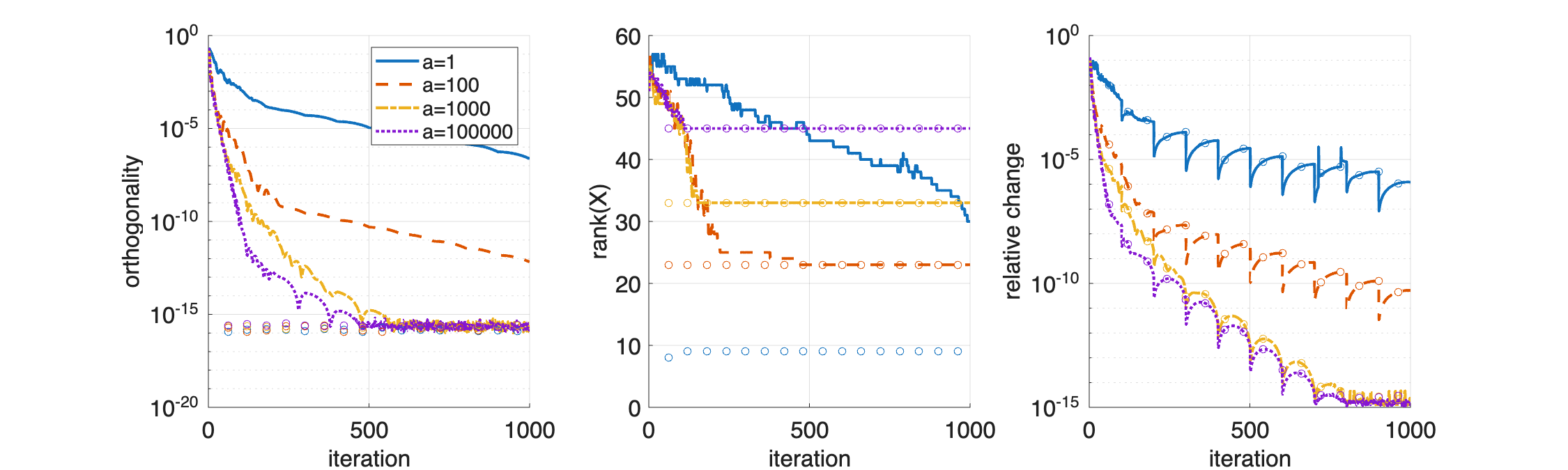}\\[2pt]
        {\small (b) Dual FISTA with fixed restart, the two variants overlaid}
        \caption{Convergence of two convex algorithms for different values of \(a\). (a) Douglas--Rachford splitting. (b) Dual FISTA with fixed restart, overlaying Variant~I (lines, gradient of \(f^*\) and proximal operator of \(g^*\)) and Variant~II (circles, gradient of \(g^*\) and proximal operator of \(f^*\)), with curves of the same color corresponding to the same value of \(a\). Each row shows the orthogonality violation \(\norm{\bm X\bm B}_F\), the rank of \(\bm X\), and the relative change between iterates. Variant~II keeps the orthogonality violation at machine precision throughout, since its iterate satisfies \(\bm X\bm B = \bm 0\) exactly.}
        \label{fig:convex-1}
    \end{center}
\end{figure}

\begin{figure}[htbp]
    \begin{center}
        \includegraphics[width=\textwidth]{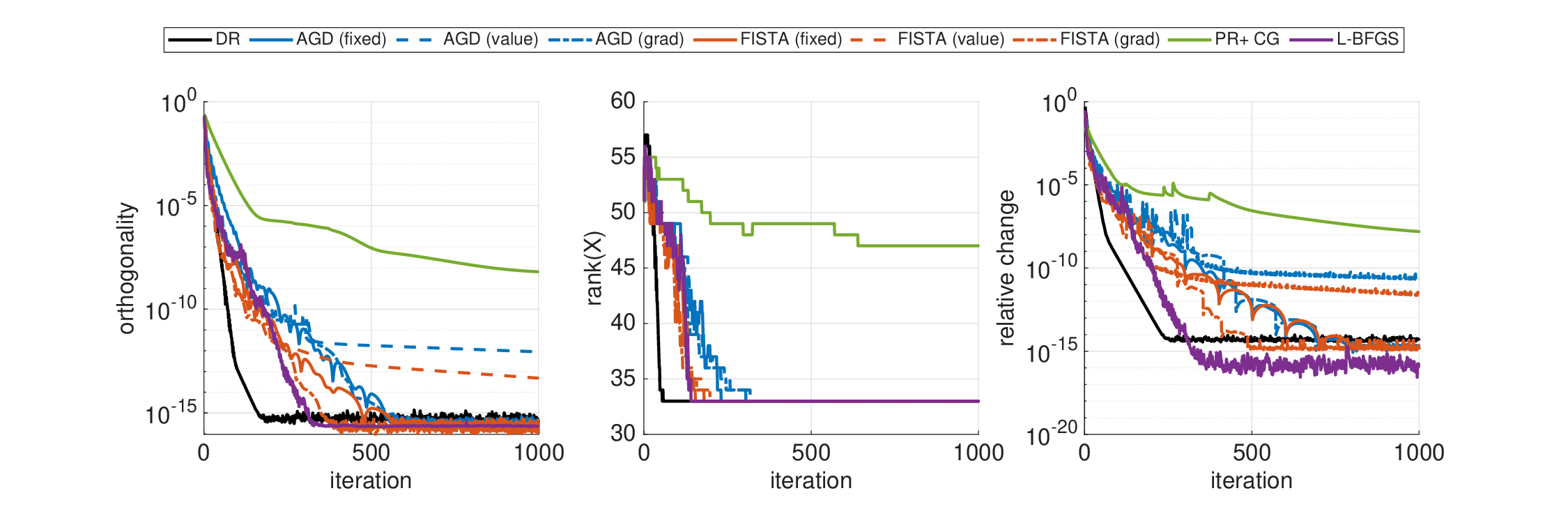}
        \caption{Convergence of all nine convex algorithms at the single penalty \(a = 1000\) (\(a_1 = a_2 = a/2\)): the orthogonality violation \(\norm{\bm X\bm B}_F\), the rank of \(\bm X\), and the relative change between iterates versus iteration. Color encodes the method family (Douglas--Rachford, accelerated gradient, FISTA, PR+ conjugate gradient, L-BFGS) and line style encodes the restart strategy (fixed, function-value, gradient), with dual FISTA shown for Variant~I. The DR step size \(\eta = 7a\) and relaxation \(\alpha = 0.88\), jointly tuned at this \(a\), give the fastest convergence (\(65\) iterations, versus \(114\) at the untuned \(\eta = 10^{0.5}a\), \(\alpha = 0.9\)). All methods reach the same minimizer (rank \(33\), relative correction \(3.29\%\)) except the PR+ conjugate gradient, which has not converged within 1000 iterations.}
        \label{fig:convex-2}
    \end{center}
\end{figure}

\subsection{Results for the non-convex formulation}

For the tangent-space accelerated alternating projection (TAP, Algorithm~\ref{TAP for non-convex problem}), we test different rank constraints \(r\) and run 200 iterations starting from \(\bm X_0 = -\min\{\bm A, \bm 0\}\). The convergence curves are shown in Figure~\ref{1D1V TAP}.

\begin{figure}[htbp]
    \begin{center}
        \includegraphics[width=\textwidth]{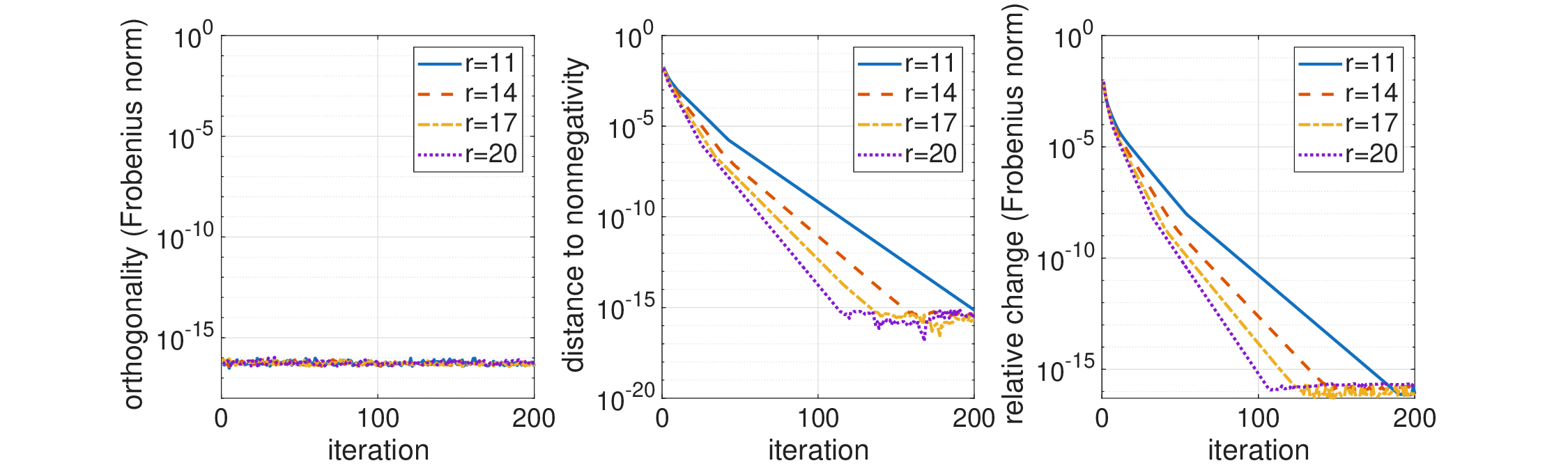}
        \caption{Tangent-space accelerated alternating projection (TAP): convergence for different rank constraints \(r\). Left: orthogonality violation \(\norm{\bm X\bm B}_F\), which stays at machine precision throughout because TAP keeps \(\bm X\bm B = \bm 0\) exactly at every iterate. Middle: distance to the nonnegative set, \(\max(-\min\{\bm A+\bm X,\bm 0\})\). Right: relative change between successive iterates.}
        \label{1D1V TAP}
    \end{center}
\end{figure}

\subsection{\texorpdfstring{Empirical cost scaling with problem size}{Empirical cost scaling with problem size}}\label{sec:scaling}

The timing experiments above use a single $64\times 128$ matrix. To verify the cost scaling empirically, we run the conservative low-rank scheme of \cite{guo2022conservative} for a strong Landau damping problem on a sequence of refined phase-space grids and measure the cost as the matrix size grows. We use the strong Landau damping initial condition $f_0(x,v) = \left(1 + \tfrac12\cos\tfrac{x}{2}\right)\frac{1}{\sqrt{2\pi}}e^{-v^2/2}$ on $\Omega_x\times\Omega_v = [0,4\pi]\times[-6,6]$ and take the low-rank solution snapshot at $t = 10$, by which time the SVD-type truncation has introduced negative entries. For each grid we form the rescaled matrix $\bm A$ and its conservative orthogonal split $\bm A = \bm A_1 + \bm A_2$ with $\bm A_1 = \bm A\bm B\bm B^T$, exactly as in Sections~\ref{sec:rescaled}--\ref{sec:correction-formulation}. We use four grids with $N_v = 2N_x$, listed in Table~\ref{tab:scaling}. The orthogonality $\bm A_2\bm B = \bm 0$ holds to machine precision ($\norm{\bm A_2\bm B}_F \lesssim 10^{-13}$) on every grid. We emphasize that this strong Landau damping data (perturbation amplitude $\tfrac12$) is different from the data used in the rest of the paper (Figure~\ref{1D1V data} and Table~\ref{tab:correction}). We use it here because the same low-rank solver then generates a self-consistent family of nonnegative-correction problems across all grid resolutions, which is what we need to measure the cost scaling. Figure~\ref{fig:data_sizes} shows the low-rank solution and its negative part for the $64\times 128$ and $256\times 512$ grids: the solution exhibits the same filamentary structure on both grids, while the truncation-induced negative entries become finer and smaller in magnitude as the grid is refined.

\begin{figure}[htbp]
    \begin{center}
        \includegraphics[width=\textwidth]{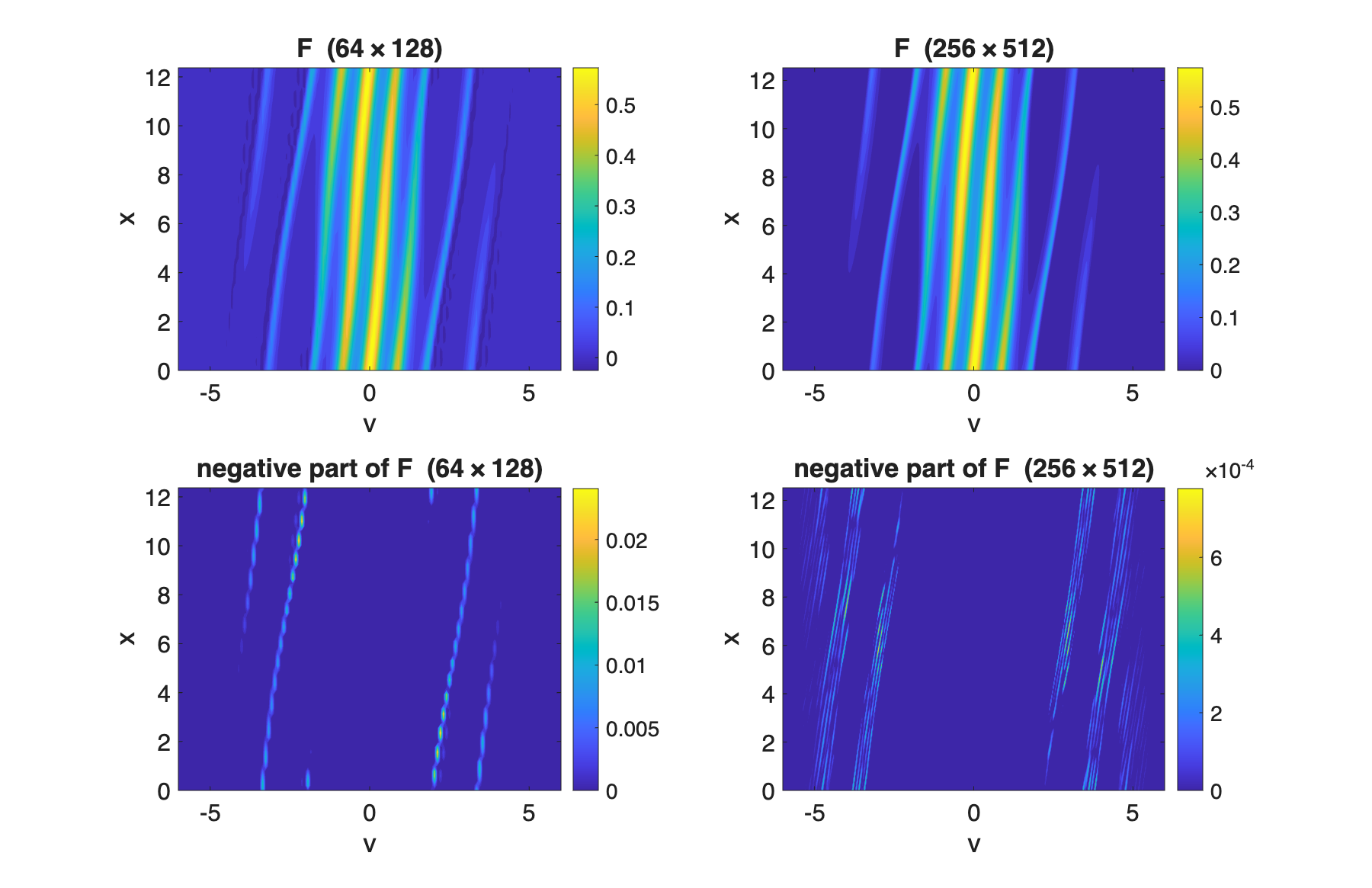}
        \caption{Low-rank Landau damping solution $F$ (top) and its negative part $-\min\{F,\bm 0\}$ (bottom) on the $64\times 128$ (left) and $256\times 512$ (right) grids, in the style of the left column of Figure~\ref{1D1V data}. The negative entries, introduced by the SVD-type truncation, concentrate along the filament edges and become finer and smaller in magnitude under refinement.}
        \label{fig:data_sizes}
    \end{center}
\end{figure}

\begin{table}[htbp]
\begin{center}

\begin{tabular}{lcccccc}
\toprule
 & & \multicolumn{2}{c}{convex (DR splitting)} & \multicolumn{2}{c}{non-convex (TAP)} & \\
\cmidrule(lr){3-4}\cmidrule(lr){5-6}
grid $N_x\times N_v$ & $mn$ & full SVD (s) & DR / iter (s) & tangent proj.\ (s) & TAP / iter (s) & ratio \\
\midrule
$32\times 64$   & $2{,}048$   & $1.9\times10^{-4}$ & $2.6\times10^{-4}$ & $3.1\times10^{-4}$ & $3.0\times10^{-4}$ & $0.9$ \\
$64\times 128$  & $8{,}192$   & $5.5\times10^{-4}$ & $8.9\times10^{-4}$ & $4.4\times10^{-4}$ & $4.9\times10^{-4}$ & $1.8$ \\
$128\times 256$ & $32{,}768$  & $3.1\times10^{-3}$ & $4.1\times10^{-3}$ & $6.3\times10^{-4}$ & $8.7\times10^{-4}$ & $4.7$ \\
$256\times 512$ & $131{,}072$ & $1.7\times10^{-2}$ & $2.8\times10^{-2}$ & $1.1\times10^{-3}$ & $1.6\times10^{-3}$ & $18.0$ \\
\bottomrule
\end{tabular}

\caption{Per-iteration cost scaling across grid sizes for the Landau damping problem (median of three trials, convergence diagnostics excluded). For the convex problem we report the cost of one full $m\times n$ SVD (the dominant kernel) and the full per-iteration cost of the Douglas--Rachford splitting. For the non-convex problem we report the cost of one tangent-space projection (the $2r\times 2r$-SVD kernel) and the full per-iteration cost of TAP, with $r = 20$ fixed. The last column is the per-iteration ratio, DR\,/\,TAP. The negativity measure $\norm{\min\{\bm A,\bm 0\}}_F/\norm{\bm A}_F$ ranges from $2.8\%$ to $11.3\%$ and $\rank(\bm A_2)$ from $23$ to $41$ across the four grids.}
\label{tab:scaling}
\end{center}
\end{table}

For convenience we write $m = N_x$ and $n = N_v$, so the low-rank field is an $m\times n$ matrix. The per-iteration cost of every convex algorithm is dominated by a single full $m\times n$ SVD, which costs $\mathcal{O}(mn\min(m,n))$, whereas a TAP iteration is dominated by one tangent-space projection (a $2r\times 2r$ SVD plus $\mathcal{O}(mnr)$ work, Algorithm~\ref{TP for non-convex problem}) with a fixed rank $r$. Since $n = 2m$ here, we have $\min(m,n) = m$ and $mn = 2m^2$, so the convex per-iteration cost grows as $\mathcal{O}\!\left((mn)^{3/2}\right)$ while the TAP per-iteration cost grows only as $\mathcal{O}(mn)$. For each grid size we time both the dominant kernel and the full algorithm (Douglas--Rachford splitting for the convex problem, with $a = 10^3$, and TAP for the non-convex problem, with $r = 20$). To make the comparison fair we strip the convergence diagnostics from both inner loops, so the reported times reflect only the algorithmic work. The results are reported in Table~\ref{tab:scaling} and Figure~\ref{fig:scaling}.

\begin{figure}[htbp]
    \begin{center}
        \includegraphics[width=\textwidth]{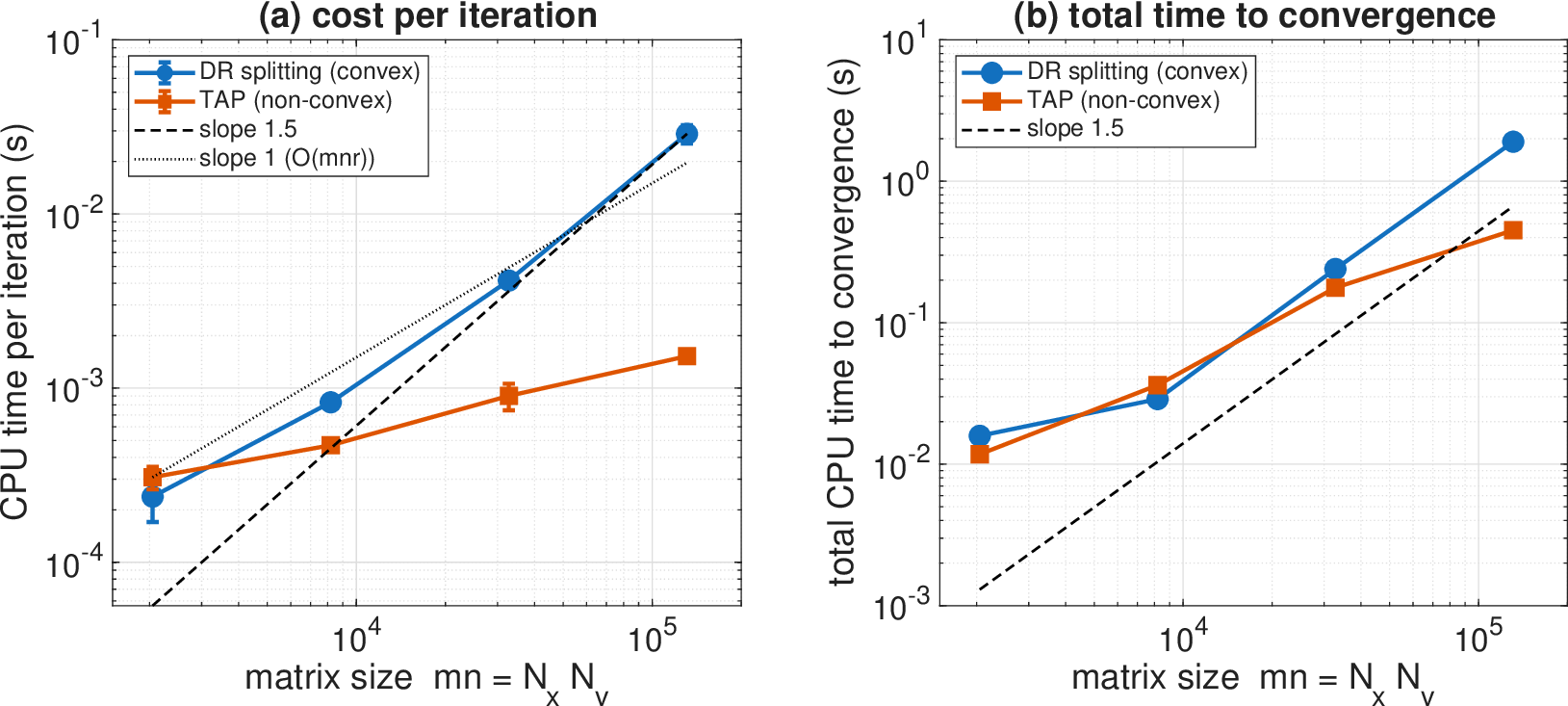}
        \caption{Cost scaling versus matrix size $mn = N_x N_v$ (log--log). (a) Per-iteration cost of the Douglas--Rachford splitting (convex) and the TAP algorithm (non-convex), plotted as the median over $20$ runs with error bars of $\pm 1$ standard deviation. The DR cost follows the predicted $(mn)^{3/2}$ slope at the larger sizes, while the TAP cost grows \emph{sub-linearly} (empirical slope $\approx 0.4$), below its asymptotic $\mathcal{O}(mnr)$ rate (slope $1$). In this size range a TAP iteration is dominated by the size-independent cost of the tangent-space projection (the $2r\times 2r$ SVD and the QR of the tangent factors, $\mathcal{O}(r^3+(m+n)r^2)$) rather than by the $\mathcal{O}(mnr)$ matrix products, which take only microseconds, so the linear regime would set in only at much larger $mn$. The per-iteration gap between DR and TAP nonetheless reaches about $18\times$ at $256\times 512$. (b) Total wall-clock time to convergence (relative change of the iterate below $10^{-8}$, with the DR step size tuned per grid): the two methods are comparable on the smaller grids, while TAP is about $4\times$ faster at $256\times 512$. With its tuned step size DR needs fewer iterations than TAP, but its per-iteration full SVD makes each iteration much more expensive.}
        \label{fig:scaling}
    \end{center}
\end{figure}

The measured costs confirm the predicted per-iteration scaling (Figure~\ref{fig:scaling}(a)). The convex per-iteration cost follows the $(mn)^{3/2}$ reference slope at the larger grids (the DR per-iteration time increases by a factor of about $6.9$ over the last fourfold increase in $mn$, approaching the asymptotic factor of $8$), while the TAP per-iteration cost grows much more slowly. The per-iteration advantage of TAP therefore grows monotonically with the problem size, from being slightly more expensive on the smallest $32\times 64$ grid (where the highly optimized full SVD of a tiny matrix is cheaper than the TAP projection overhead) to being $18$ times cheaper on the $256\times 512$ grid.

The total cost is the per-iteration cost times the number of iterations to convergence, so we also examine how the iteration count scales with the problem size. We run both algorithms until the relative change of the iterate $\norm{\bm X_k - \bm X_{k-1}}_F/\norm{\bm X_{k-1}}_F$ falls below $10^{-8}$, with $a = 10^3$ and $r = 20$ for TAP. For DR we tune the step size at each grid, so that DR is compared at its best at every size. The optimal step grows with the problem (from $\eta \approx 2a$ on the coarsest grid to $\eta \approx 11a$ on the finest, with relaxation $\alpha$ between $0.90$ and $0.94$). The two algorithms reach comparable relative corrections $\norm{\bm X}_F/\norm{\bm A}_F$ (last two columns of Table~\ref{tab:conv}), so the comparison is fair. With its step size tuned, DR converges in markedly fewer iterations than TAP, and the gap widens with refinement (DR from $30$ to $82$ iterations versus TAP from $32$ to $264$, Table~\ref{tab:conv}). Each DR iteration nevertheless requires a full SVD, whereas a TAP iteration requires only a $2r\times 2r$ SVD, so DR's per-iteration cost grows much faster (Figure~\ref{fig:scaling}(a)). The two effects offset in the total wall-clock time (Figure~\ref{fig:scaling}(b)), which is comparable for the two methods on the three smaller grids (DR is even slightly faster at $64\times 128$) while TAP is about $4\times$ faster on the finest $256\times 512$ grid. Since the per-iteration cost gap scales as $\min(m,n)/r = m/r$ while DR's iteration advantage grows only mildly, we expect TAP's total-time advantage to keep widening on still finer grids.

\begin{table}[htbp]
\begin{center}

\begin{tabular}{lccccccc}
\toprule
 & & \multicolumn{2}{c}{iterations to conv.} & \multicolumn{2}{c}{total time (s)} & \multicolumn{2}{c}{$\norm{\bm X}_F/\norm{\bm A}_F$} \\
\cmidrule(lr){3-4}\cmidrule(lr){5-6}\cmidrule(lr){7-8}
grid $N_x\times N_v$ & $mn$ & DR & TAP & DR & TAP & DR & TAP \\
\midrule
$32\times 64$   & $2{,}048$   & $30$  & $32$  & $0.016$ & $0.012$ & $11.6\%$ & $11.6\%$ \\
$64\times 128$  & $8{,}192$   & $31$  & $74$  & $0.029$ & $0.036$ & $5.8\%$  & $6.2\%$ \\
$128\times 256$ & $32{,}768$  & $55$  & $196$ & $0.24$  & $0.18$  & $8.1\%$  & $9.5\%$ \\
$256\times 512$ & $131{,}072$ & $82$  & $264$ & $1.90$  & $0.45$  & $2.8\%$  & $3.7\%$ \\
\bottomrule
\end{tabular}

\caption{Iterations to convergence (relative change of the iterate below $10^{-8}$) and total wall-clock time for DR splitting and TAP ($r = 20$) across grid sizes, with $a = 10^3$. The DR step size is tuned per grid (the optimum grows with the grid: $\eta/a = 2.0, 2.5, 7.1, 11.2$, relaxation $\alpha = 0.90$ on the two coarse grids and $0.94$ on the two fine grids). With its step size tuned, DR converges in fewer iterations than TAP, increasingly so as the grid is refined. Each DR iteration nevertheless requires a full SVD, so its per-iteration cost grows faster, and the total time favors TAP only on the finest grid. The last two columns show that the two algorithms reach comparable relative corrections. CPU times are indicative and machine-dependent.}
\label{tab:conv}
\end{center}
\end{table}

\begin{remark}[Accelerating the per-iteration SVD]
The per-iteration cost of the convex algorithms is dominated by a single full \(m\times n\) SVD (Table~\ref{tab:scaling}), so it is natural to ask whether a cheaper low-rank factorization can replace it. A maxvol-based cross (CUR) approximation \cite{goreinov2010}, which samples a few rows and columns without ever forming the full matrix, is slower than the built-in SVD at every size we tested (for instance about \(8\) times slower at \(128\times 256\)). The reason is that it is designed to minimize the number of entry evaluations, an advantage only when the entries are expensive to compute, as for implicit functions or high-dimensional tensors, whereas our matrices are explicit and stored in memory, so a single level-3 BLAS SVD is faster.

A randomized SVD \cite{halko2011} is more promising, but its benefit is strongly rank-dependent, and this is the essential point. Its cost scales as \(\mathcal{O}(mnr)\), so it beats the full SVD, which costs \(\mathcal{O}(mn\min(m,n))\), only when the numerical rank \(r\) is a small fraction of \(\min(m,n)\). This holds for the data matrix \(\bm A\), which is genuinely low rank: there the randomized SVD reproduces the same singular triplets and is faster on the finer grids, by about three times on the finest (Table~\ref{tab:svdaccel}, upper block). It does \emph{not} hold for the matrices that the DR iteration actually factors. Each iteration applies the SVD not to \(\bm A\) but to a proximal input \(\bm W\), a dense intermediate matrix formed from the current iterates that is numerically full rank, and the soft-thresholded output retains a large fraction of the spectrum, roughly half of \(\min(m,n)\) (Table~\ref{tab:svdaccel}, lower block). On these matrices the same randomized SVD gives no speedup and is in fact slower at every size. Replacing the SVD by a randomized one therefore does not accelerate DR, even though it reproduces the same iterates to controllable accuracy. The low \(\mathcal{O}(mnr)\) per-iteration cost is instead achieved by the tangent-space method, which keeps the iterate in rank-\(r\) factored form and never forms or factors a full-rank matrix.
\end{remark}

\begin{table}[htbp]
\begin{center}

\begin{tabular}{lrccc}
\toprule
grid $N_x\times N_v$ & $r$ & full SVD (s) & randomized SVD (s) & speedup \\
\midrule
\multicolumn{5}{l}{\emph{data matrix} \(\bm A\) (genuinely low rank)}\\
$64\times 128$  & $31$  & $2.9\times10^{-4}$ & $6.6\times10^{-4}$ & $0.44\times$ \\
$128\times 256$ & $43$  & $2.2\times10^{-3}$ & $1.6\times10^{-3}$ & $1.35\times$ \\
$256\times 512$ & $44$  & $9.5\times10^{-3}$ & $3.2\times10^{-3}$ & $3.02\times$ \\
\midrule
\multicolumn{5}{l}{\emph{DR proximal matrix} \(\bm W\) (numerically full rank)}\\
$64\times 128$  & $58$  & $3.6\times10^{-4}$ & $1.3\times10^{-3}$ & $0.29\times$ \\
$128\times 256$ & $84$  & $2.2\times10^{-3}$ & $3.9\times10^{-3}$ & $0.58\times$ \\
$256\times 512$ & $121$ & $1.2\times10^{-2}$ & $1.3\times10^{-2}$ & $0.95\times$ \\
\bottomrule
\end{tabular}

\caption{Rank-dependence of the randomized-SVD speedup. The same randomized SVD (subspace iteration with oversampling) is applied to two matrices at each grid: the data matrix \(\bm A\), which is genuinely low rank, and a representative Douglas--Rachford proximal input \(\bm W\), a dense intermediate matrix formed from the current iterates that is numerically full rank. Here \(r\) is the numerical rank of \(\bm A\) and the retained rank of the thresholded \(\bm W\), and the randomized SVD targets this \(r\). The speedup is the full-SVD time divided by the randomized-SVD time (fastest of seven trials), so a value above one means the randomized SVD is faster. On the low-rank \(\bm A\) it is faster on the finer grids, whereas on the full-rank \(\bm W\) it is slower at every size, because its \(\mathcal{O}(mnr)\) cost helps only when \(r \ll \min(m,n)\). CPU times are indicative and machine-dependent.}
\label{tab:svdaccel}
\end{center}
\end{table}

\subsection{\texorpdfstring{Application as a positivity limiter in the LoMaC scheme}{Application as a positivity limiter in the LoMaC scheme}}\label{sec:limiter}

So far we have applied the correction to a single fixed matrix. We now use it as a \emph{positivity limiter} inside the time-dependent LoMaC low-rank scheme of \cite{guo2022local}, which conserves total mass, momentum \emph{and} energy at the discrete level. As the low-rank solution is advanced, the SVD-type truncation at each step introduces small negative entries. We apply the correction periodically (every $k$ steps and on the final step, each application an SVD-scale computation) to remove the unphysical negative entries while preserving the macroscopic moments, since its constraint $\bm X\bm B = \bm 0$ leaves the three conserved densities (mass, momentum and energy) untouched. We compare two correction algorithms as the limiter. The first is the convex Douglas--Rachford splitting (DR, Algorithm~\ref{DR for convex problem}), which imposes \emph{no a-priori rank} on the correction: the corrected field takes whatever rank nonnegativity requires, and its rank is reduced only by the scheme's own conservative truncation $\mathcal{T}_{\varepsilon_{\rm tr}}$ at a tolerance $\varepsilon_{\rm tr}$. (For DR we use the step size $\eta = 10\,a$, found near-optimal by a step-size sweep on a representative correction subproblem: $83$ iterations to drive $\norm{\bm X\bm B}_F$ below $10^{-10}$, versus $190$ at $\eta=10^{0.5}a$ and $602$ at $\eta=a$.) The second is the rank-constrained TAP (Algorithm~\ref{TAP for non-convex problem}). The essential point is that its rank must \emph{not} be fixed at some foreign value. Instead, we let it track the solution, taking $r = N+3$ where $N$ is the current numerical rank of the low-rank solution at each application. This gives TAP the rank the field already carries, plus a small margin for the extra rank that nonnegativity introduces, so that, like DR, it is never artificially capped. (A fixed rank, e.g.\ $r=20$, would cap the attainable nonnegativity, as we note below.)

We test the limiter on the bump-on-tail instability used to validate the LoMaC scheme (Example~5.4 of \cite{guo2022local}), a well-resolved, genuinely low-rank run of the same family as the data in Figure~\ref{1D1V data}, with initial datum
\begin{equation*}
  f_0(x,v) = \left(1+\alpha\cos(kx)\right)\!\left(n_p\,e^{-v^2/2}+n_b\,e^{-(v-u)^2/(2 v_t^2)}\right),
\end{equation*}
$\alpha=0.04$, $k=0.3$, $n_p=\tfrac{9}{10\sqrt{2\pi}}$, $n_b=\tfrac{2}{10\sqrt{2\pi}}$, $u=4.5$, $v_t=0.5$, on $[0,2\pi/k]\times[-13,13]$, integrated to $T=30$ with $\varepsilon=10^{-4}$, using the validated LoMaC discretization (linear fifth-order upwind in $x$, $\mathrm{CFL}=0.1$ on both grids, so that the two runs share the same time-step criterion). We run two grids, $64\times 128$ and $128\times 256$, and apply the limiter every $k=1000$ steps. Table~\ref{tab:limiter} and Figure~\ref{fig:limiter} summarize the results.

In this regime the LoMaC solution is genuinely low rank (numerical rank $28$ on $64\times 128$ and $49$ on $128\times 256$ at $T=30$), and the scheme conserves total mass, momentum \emph{and} energy to machine precision ($\sim 10^{-14}$ for all three). The SVD truncation nonetheless leaves a negativity $\norm{\min\{\bm F,\bm 0\}}_F/\norm{\bm F}_F \approx 2$--$3\times 10^{-4}$. The limiter removes it, and because the correction satisfies $\bm X\bm B=\bm 0$ it does so without disturbing the three conserved quantities. Here the two correction algorithms differ in an instructive way (Table~\ref{tab:limiter}): TAP keeps $\bm X_k\bm B = \bm 0$ \emph{exactly at every iterate}, so the limited run conserves mass, momentum and energy to machine precision ($\sim 10^{-14}$), just like the unlimited LoMaC scheme. DR enforces $\bm X\bm B = \bm 0$ only in the limit, so each application perturbs the moments by its orthogonality residual $\norm{\bm X\bm B}_F$, and conservation is preserved only to that level ($\sim 10^{-10}$ at $\varepsilon_{\rm tr}=10^{-4}$, improving to $\sim 10^{-12}$ as the correction is converged more tightly at $\varepsilon_{\rm tr}=10^{-10}$). For exact conservation, then, the rank-constrained TAP limiter is preferable, which mirrors the constraint-preservation property noted in Section~\ref{discussion}.

The other point concerns the interplay between nonnegativity and rank, and it explains why the stored solution is not nonnegative to machine zero by default. The correction itself enforces nonnegativity \emph{absolutely}: the corrected field satisfies $\bm A+\bm X \geqslant \bm 0$ to machine precision \emph{before} truncation (for DR, $\bm X$ is the exact pointwise clip $\bm X = \max\{-\bm A,\,\cdot\,\}$, so $\bm A+\bm X\geqslant\bm 0$ identically). Negativity reappears \emph{only} when this corrected field is re-expressed in the rank-bounded low-rank format. A nonnegative field generically has higher rank than the smooth, slightly negative one, so the conservative truncation $\mathcal{T}_{\varepsilon_{\rm tr}}$ must discard a singular-value tail of norm $O(\varepsilon_{\rm tr})$. Because the corrected field sits exactly at zero where it had been negative, discarding this tail perturbs it by $O(\varepsilon_{\rm tr})$ and pushes those entries just below zero, leaving a negativity of magnitude $O(\varepsilon_{\rm tr})$. The negativity floor of the stored low-rank solution is therefore set by the truncation tolerance $\varepsilon_{\rm tr}$, not by the correction: at $\varepsilon_{\rm tr}=10^{-4}$ (the scheme's own truncation level) the negativity is $\sim 2\times 10^{-6}$ at a modestly higher rank ($49\to 51$--$53$ on $128\times 256$), while tightening to $\varepsilon_{\rm tr}=10^{-10}$ drives it toward machine precision ($\sim 10^{-12}$, $\min \bm F \sim -10^{-11}$ to $-10^{-12}$) at a higher rank ($49\to 84$ for DR, $49\to 91$ for TAP). Provided the rank is allowed to grow, both algorithms reach comparable floors (Table~\ref{tab:limiter} and Figure~\ref{fig:limiter}). This is also why letting TAP's rank track the solution ($r=N+3$) matters: a fixed externally-imposed rank caps the attainable nonnegativity (with $r=20$ the rank-$20$ correction stalls at $\min(\bm A+\bm X)\approx-1.4\times 10^{-10}$ regardless of $\varepsilon_{\rm tr}$), whereas the adaptive rank, like the unconstrained convex correction, removes that cap. In short, the residual negativity is governed by how much rank one keeps in the low-rank format, not by the correction itself.

Figure~\ref{fig:limiter}(a) shows the negativity of the stored low-rank field (that is, after the correction \emph{and} the re-truncation) just after each limiter application. The DR and TAP curves overlap at each $\varepsilon_{\rm tr}$, reaching $\sim 2\times 10^{-6}$ at $\varepsilon_{\rm tr}=10^{-4}$ and $\sim 10^{-12}$ at $\varepsilon_{\rm tr}=10^{-10}$, confirming that the floor follows $\varepsilon_{\rm tr}$ rather than the correction. (Between applications the per-step truncation at $\varepsilon$ lets the negativity regrow, bounded by the no-limiter level, so a moderate $k$ controls it while the final output, limited on the last step, is nonnegative to $\varepsilon_{\rm tr}$.) Figure~\ref{fig:limiter}(b) shows the rank: both limiters hold the solution at the higher rank that nonnegativity demands, increasingly so as $\varepsilon_{\rm tr}$ is tightened. The cost is modest and comparable for the two methods: with $k=1000$ the limiter performs $15$ ($64\times 128$) or $30$ ($128\times 256$) correction calls, raising the wall-clock time on $64\times 128$ from $23$ to $28$\,s and on $128\times 256$ from $174$\,s to $208$--$230$\,s (a $20$--$30\%$ overhead, dominated by the higher rank sustained between calls rather than by the correction solves). TAP is the cheaper of the two here, and its per-iteration advantage over DR grows on finer grids (Section~\ref{sec:scaling}).

\begin{table}[htbp]
\begin{center}

\resizebox{\textwidth}{!}{%
\begin{tabular}{llcccccc}
\toprule
mesh & limiter & mass dev. & mom.\ dev. & energy dev. & negativity & $\rank$ & CPU (s) \\
\midrule
$64\times 128$  & none                                  & $5.6\times10^{-15}$ & $2.5\times10^{-14}$ & $1.5\times10^{-14}$ & $1.8\times10^{-4}$  & $28$  & $23$  \\
$64\times 128$  & DR,\ $\varepsilon_{\rm tr}=10^{-4}$       & $4.4\times10^{-14}$ & $8.3\times10^{-14}$ & $1.6\times10^{-13}$ & $1.1\times10^{-6}$  & $27$  & $28$  \\
$64\times 128$  & TAP,\ $\varepsilon_{\rm tr}=10^{-4}$      & $6.6\times10^{-15}$ & $2.5\times10^{-14}$ & $1.6\times10^{-14}$ & $1.5\times10^{-6}$  & $28$  & $28$  \\
$64\times 128$  & DR,\ $\varepsilon_{\rm tr}=10^{-10}$      & $2.7\times10^{-14}$ & $2.2\times10^{-13}$ & $5.5\times10^{-13}$ & $2.1\times10^{-12}$ & $54$  & $28$  \\
$64\times 128$  & TAP,\ $\varepsilon_{\rm tr}=10^{-10}$     & $3.8\times10^{-15}$ & $2.6\times10^{-14}$ & $1.7\times10^{-14}$ & $2.8\times10^{-12}$ & $53$  & $28$  \\
\midrule
$128\times 256$ & none                                  & $9.8\times10^{-15}$ & $4.0\times10^{-14}$ & $3.5\times10^{-14}$ & $3.0\times10^{-4}$  & $49$  & $174$ \\
$128\times 256$ & DR,\ $\varepsilon_{\rm tr}=10^{-4}$       & $1.7\times10^{-11}$ & $1.1\times10^{-10}$ & $6.3\times10^{-11}$ & $2.0\times10^{-6}$  & $53$  & $222$ \\
$128\times 256$ & TAP,\ $\varepsilon_{\rm tr}=10^{-4}$      & $9.9\times10^{-15}$ & $3.3\times10^{-14}$ & $3.0\times10^{-14}$ & $2.0\times10^{-6}$  & $51$  & $208$ \\
$128\times 256$ & DR,\ $\varepsilon_{\rm tr}=10^{-10}$      & $5.6\times10^{-13}$ & $2.2\times10^{-12}$ & $6.7\times10^{-13}$ & $2.9\times10^{-13}$ & $84$  & $230$ \\
$128\times 256$ & TAP,\ $\varepsilon_{\rm tr}=10^{-10}$     & $9.3\times10^{-15}$ & $3.4\times10^{-14}$ & $2.9\times10^{-14}$ & $1.4\times10^{-12}$ & $91$  & $208$ \\
\bottomrule
\end{tabular}}

\caption{Correction as a positivity limiter in the LoMaC low-rank scheme \cite{guo2022local} for the bump-on-tail run, $T=30$, applied every $1000$ steps, comparing the convex DR limiter (no rank cap) with the rank-constrained TAP limiter at rank $r=N+3$ ($N$ = current solution rank). The negativity is $\norm{\min\{\bm F,\bm 0\}}_F/\norm{\bm F}_F$ for the limited (post-application) solution. The LoMaC scheme conserves mass, momentum and energy to machine precision. TAP keeps $\bm X_k\bm B=\bm 0$ exactly at every iterate, so the limited run conserves all three to machine precision. DR enforces $\bm X\bm B=\bm 0$ only at convergence, so it preserves conservation to its orthogonality residual ($\sim 10^{-10}$ at $\varepsilon_{\rm tr}=10^{-4}$, $\sim 10^{-12}$ at $\varepsilon_{\rm tr}=10^{-10}$). Both drive the negativity to the re-truncation floor $\varepsilon_{\rm tr}$, with the kept rank rising as $\varepsilon_{\rm tr}$ is tightened. Both grids use $\mathrm{CFL}=0.1$. The per-step cost of the LoMaC integration scales as $\mathcal{O}((m+n)r^2)$, linear in the grid size and quadratic in the rank $r$, so it remains a low-rank cost even as the limiter raises $r$. Enforcing nonnegativity increases the rank only modestly and transiently: at $\varepsilon_{\rm tr}=10^{-4}$, the scheme's own truncation level, the rank is essentially unchanged, whereas at $\varepsilon_{\rm tr}=10^{-10}$ it roughly doubles (for instance $28\to 53$ on $64\times 128$ and $49\to 91$ on $128\times 256$), but this higher rank is sustained only briefly after each application before the per-step truncation relaxes it. The time-averaged overhead is therefore only $20$--$30\%$ and stays far below the full-grid $\mathcal{O}(mn\min(m,n))$ cost. CPU times are indicative and machine-dependent.}
\label{tab:limiter}
\end{center}
\end{table}

\begin{figure}[htbp]
    \begin{center}
        \includegraphics[width=\textwidth]{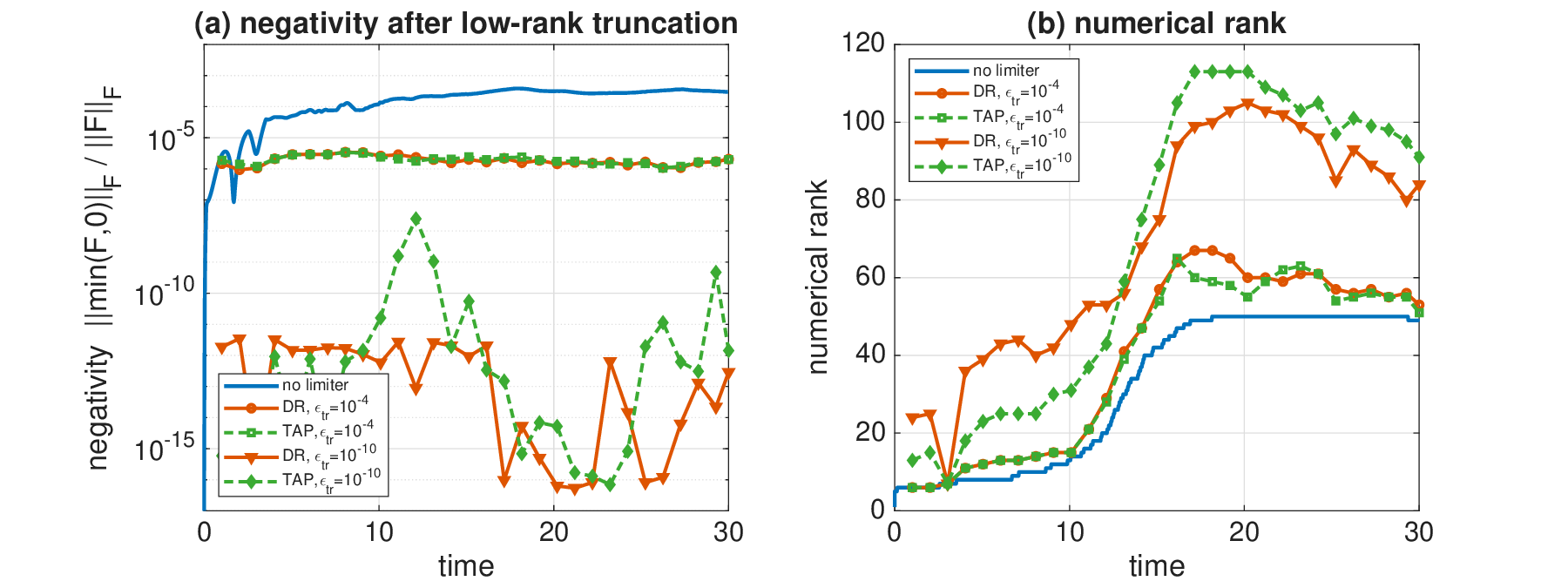}
        \caption{Positivity limiter in the LoMaC low-rank scheme for the bump-on-tail run ($128\times 256$, $T=30$), applied every $1000$ steps, comparing the convex DR limiter (no rank cap) with the rank-constrained TAP limiter at rank $r=N+3$. (a) Negativity $\norm{\min\{\bm F,\bm 0\}}_F/\norm{\bm F}_F$ of the stored low-rank field versus time. The correction enforces nonnegativity to machine precision \emph{before} truncation ($\bm A+\bm X\geqslant\bm 0$ identically). The residual negativity plotted here is reintroduced \emph{solely} by re-expressing the corrected field in the rank-bounded low-rank format, by discarding singular values below $\varepsilon_{\rm tr}$, so it scales with $\varepsilon_{\rm tr}$. Without the limiter it grows to $\sim 3\times 10^{-4}$. With either limiter each application resets it to this re-truncation floor, $\sim 2\times 10^{-6}$ at $\varepsilon_{\rm tr}=10^{-4}$ and $\sim 10^{-12}$ at $\varepsilon_{\rm tr}=10^{-10}$ (DR and TAP curves overlap). (b) Numerical rank: enforcing nonnegativity raises the kept rank above the no-limiter value ($49$), more so as $\varepsilon_{\rm tr}$ is tightened ($\approx 51$--$53$ at $\varepsilon_{\rm tr}=10^{-4}$, $\approx 84$--$91$ at $\varepsilon_{\rm tr}=10^{-10}$). The rank is reduced only by the scheme's conservative truncation, never by a mechanical cap, and DR and adaptive-rank TAP track each other.}
        \label{fig:limiter}
    \end{center}
\end{figure}

One might worry that the numerical rank of the limited solution, which reaches a sizeable fraction of $\min(N_x,N_v)$ on the $128\times 256$ grid of Figure~\ref{fig:limiter}, would approach full rank as the mesh is refined. It does not: the near-full appearance is an artifact of the small $\min(N_x,N_v)$ at this resolution, not of the low-rank structure. To exhibit the trend cheaply we run the same test on the coarser $32\times 64$ and $64\times 128$ grids (Figure~\ref{fig:ranktrend}). The peak numerical rank grows only sub-linearly with the grid (for the no-limiter run it is $18$, $28$, $50$ across $32\times 64$, $64\times 128$, $128\times 256$, each refinement multiplying it by well under the factor $2$ that a full-rank field would require), so as a fraction of the maximum $\min(N_x,N_v)$ it \emph{decreases} under refinement. On the coarsest grid the tight-tolerance ($\varepsilon_{\rm tr}=10^{-10}$) limiter is pinned at the full-rank ceiling, whereas on $128\times 256$ it already sits at $80$--$88\%$ of it. Doubling the resolution therefore makes the rank a smaller fraction of full, not larger, confirming that the limited solution remains genuinely low rank.

\begin{figure}[htbp]
    \begin{center}
        \includegraphics[width=\textwidth]{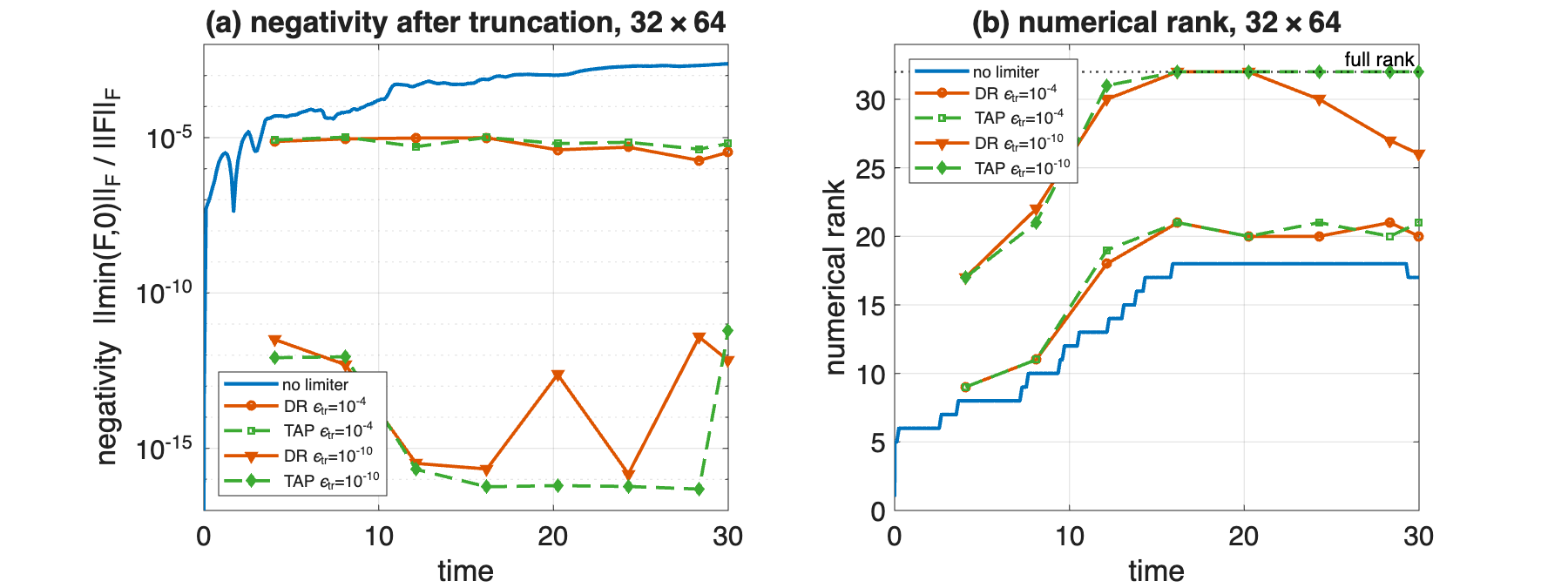}\\[4pt]
        \includegraphics[width=0.62\textwidth]{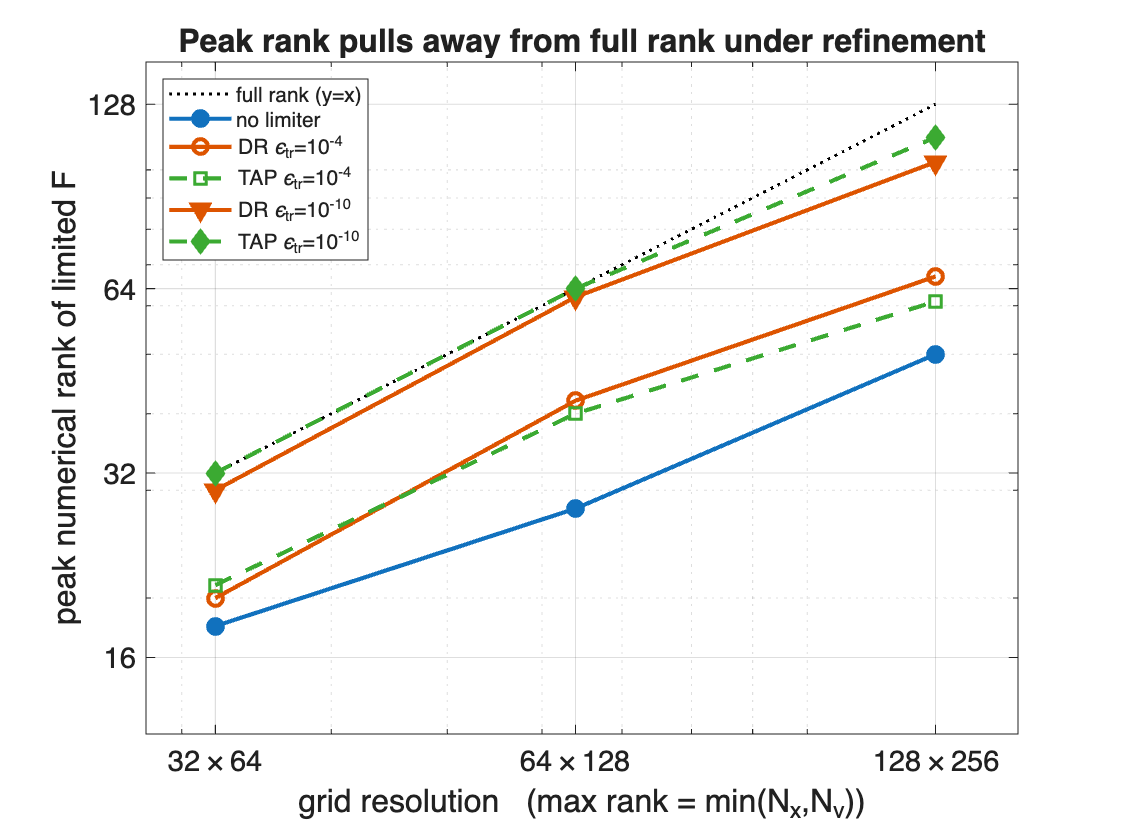}
        \caption{Rank of the limited solution versus mesh resolution, addressing whether it approaches full rank under refinement. Top: the coarse-grid ($32\times 64$) counterpart of Figure~\ref{fig:limiter}, showing the negativity (left) and the numerical rank (right) versus time; on this small grid the tight-tolerance limiter is pinned at the full-rank ceiling $\min(N_x,N_v)=32$ (dotted). Bottom: peak numerical rank of the limited solution versus grid resolution for each configuration, against the full-rank line $y=\min(N_x,N_v)$ (dotted). Every curve pulls away from the full-rank line as the mesh is refined, since the absolute rank grows sub-linearly, so the near-full rank seen in Figure~\ref{fig:limiter} is a coarse-grid artifact and doubling the resolution reduces the rank as a fraction of the maximum. All runs use $\mathrm{CFL}=0.1$.}
        \label{fig:ranktrend}
    \end{center}
\end{figure}

\subsection{Discussion}\label{discussion}

We summarize the main observations from the experiments of this section, which comprise a fixed-size correction problem (the $64\times 128$ Landau data, Table~\ref{tab:correction} and Figures~\ref{fig:convex-1}--\ref{1D1V TAP}), a cost-scaling study across four grid sizes (Section~\ref{sec:scaling}), and the use of the correction as a positivity limiter in a time-dependent solver (Section~\ref{sec:limiter}).

\paragraph{Approximation quality.} All the algorithms produce corrections of comparable quality. For $a\geqslant 10$ in the convex formulation and for every rank constraint $r$ tested in TAP, the relative correction $\norm{\bm X}_F/\norm{\bm A}_F$ is close to the theoretical lower bound (here $3.08\%$) and decreases monotonically as $a$ or $r$ increases. All convex algorithms reach the same global minimizer ($3.29\%$, rank $33$), reflecting the global optimality of the convex formulation, and TAP attains a comparable correction ($3.31\%$) at the prescribed lower rank $r=20$. The penalty $a$ and the rank bound $r$ play the same role of trading correction quality against the rank of $\bm X$. The naive low-rank baseline matches the rank and orthogonality constraints but cannot enforce nonnegativity ($\min(\bm A+\bm X)<0$), and its smaller relative correction ($2.93\%$, below the $3.08\%$ bound) is possible only because it violates nonnegativity, which is exactly the property the optimization-based corrections are designed to deliver.

\paragraph{Convergence speed.} On the fixed-size problem, Douglas--Rachford splitting converges in the fewest iterations among the convex algorithms (about $65$ at $a=1000$, Table~\ref{tab:correction}), once its step size and relaxation are tuned (Figure~\ref{fig:convex-2}). The dual methods are slower. Among them the gradient-based restart performs best, the L-BFGS method needs few iterations but an expensive line search, and the PR+ conjugate gradient is the slowest and does not meet the tolerance within $1000$ iterations. The tangent-space accelerated alternating projection converges in the fewest iterations overall (about $45$).

\paragraph{Per-iteration cost and scaling.} The two formulations differ fundamentally in per-iteration cost: every convex iteration requires a full $m\times n$ SVD ($\mathcal{O}(mn\min(m,n))$), whereas a TAP iteration requires only a $2r\times 2r$ SVD ($\mathcal{O}(mnr)$). The scaling study confirms this empirically. The per-iteration cost ratio of DR to TAP grows from order one to about $18$ as the grid is refined from $32\times 64$ to $256\times 512$ (Table~\ref{tab:scaling} and Figure~\ref{fig:scaling}(a)). With its step size tuned at each grid, DR converges in fewer iterations than TAP, but its much higher per-iteration cost dominates the total wall-clock time: the two methods are comparable on the smaller grids and TAP is about $4\times$ faster on the finest grid (Table~\ref{tab:conv} and Figure~\ref{fig:scaling}(b)), an advantage that widens under refinement since the cost gap scales as $m/r$. TAP is therefore the most cost-efficient method, increasingly so at scale, while the convex formulation offers global optimality and lets the correction rank adapt freely.

\paragraph{Constraint preservation.} The formulations also differ in which constraint the iterate satisfies before convergence. TAP keeps the orthogonality $\bm X_k\bm B=\bm 0$ exactly at every iterate, and the iterate stays low rank, whereas the convex methods that recover the primal variable by clipping (DR, dual FISTA~I, dual AGD, PR+ CG, L-BFGS) keep the nonnegativity $\bm A+\bm X_k\geqslant\bm 0$ exactly at every iterate. The dual FISTA variant~II is the mirror case, keeping $\bm X_k\bm B=\bm 0$ exactly instead. Both constraints hold at convergence, but the per-iterate property matters when the iteration is stopped early or embedded in a larger computation.

\paragraph{Use as a positivity limiter.} The last experiment (Section~\ref{sec:limiter}) embeds the correction as a positivity limiter in the conservative LoMaC low-rank Vlasov solver. Both the convex (DR) and the rank-constrained (TAP) corrections remove the negativity introduced by the SVD truncation while leaving the conserved moments untouched, since $\bm X\bm B=\bm 0$. The per-iterate constraint preservation above is decisive here: TAP, which keeps $\bm X_k\bm B=\bm 0$ exactly, preserves mass, momentum and energy to machine precision, whereas DR preserves them only to its orthogonality residual. The correction itself enforces nonnegativity to machine precision, so the only residual negativity comes from re-expressing the corrected field in the rank-bounded low-rank format, and its level is set by the re-truncation tolerance, reaching machine zero as that tolerance is tightened at the cost of a higher but still moderate rank. For this to succeed, the correction rank must be allowed to track the solution rather than be fixed in advance.

\section{Conclusion}

We developed optimization-based post-processing algorithms to recover nonnegativity in low-rank numerical solutions of Vlasov dynamics while preserving the macroscopic conservation laws pointwise. The conservation requirement is written as an orthogonality constraint on the correction term. We studied two optimization formulations. The first is the convex problem \eqref{F2-1 (homo)} based on squared nuclear norm minimization. For this problem, Theorem~\ref{thm:proximal} shows that the proximal operator can be computed through an implicit singular value thresholding equation with a threshold determined by bisection. We developed five algorithms for this formulation: Douglas--Rachford splitting, restarted dual FISTA, restarted dual accelerated gradient descent, dual PR+ conjugate gradient, and dual L-BFGS. The second is the rank-constrained non-convex formulation \eqref{F2-2}. For this formulation, we developed a tangent-space accelerated alternating projection algorithm that only requires a \(2r \times 2r\) SVD per iteration. The numerical results for the Landau damping test case show that all algorithms produce comparable correction quality. The TAP algorithm is the most cost-efficient because of its low per-iteration cost, and this advantage becomes more pronounced as the problem size increases. Among the convex algorithms, Douglas--Rachford splitting converges the fastest. The gradient-restarted dual methods give the best performance among the dual approaches. The convex formulation has the advantage of global optimality, while the TAP algorithm naturally preserves low-rank storage throughout the iteration. Finally, we demonstrated that the correction can be used as a positivity limiter inside the time-dependent LoMaC low-rank scheme, which conserves mass, momentum and energy, using either the convex DR algorithm or the rank-constrained TAP algorithm with its rank tracking the solution ($r=N+3$). On the validated, genuinely low-rank bump-on-tail run up to $T=30$, both limiters remove the truncation negativity. Because the correction rank is not capped a priori, enforcing nonnegativity raises the kept rank, and the residual negativity is set by the re-truncation tolerance, driven toward machine precision as that tolerance is tightened. Since TAP maintains the orthogonality constraint exactly at every iterate, the TAP limiter preserves all three conserved quantities to machine precision, whereas DR preserves them to its orthogonality residual. Both methods incur a modest, comparable overhead.


\section*{Declaration of AI-assisted technologies}
All original ideas, as well as codes, are attributed to the authors. During the preparation of this work, the authors used Anthropic's Claude Code to assist with mathematical discussions, further numerical implementation, and drafting of the manuscript. After using this tool, the authors carefully reviewed and edited the content as needed and take full responsibility for the final publication.

\section*{Declarations}

\noindent\textbf{Funding.}
X.\ Zhang is partially supported by NSF DMS-2208518. J.-M.\ Qiu acknowledges support provided by Air Force Office of Scientific Research FA9550-24-1-0254 and Department of Energy DE-SC0023164. S.\ Becker acknowledges support from the Department of Energy DE-SC0023346.

\smallskip
\noindent\textbf{Competing Interests.}
The authors have no competing interests to declare that are relevant to the content of this article.

\smallskip
\noindent\textbf{Data and Code Availability.}
The test data and the code used to generate the numerical results in this paper are available from the authors upon request.

\smallskip
\noindent\textbf{Author Contributions.}
All authors contributed to the conception of the study, the methodology, the numerical implementation, and the writing of the manuscript. All authors read and approved the final manuscript.

\bibliographystyle{abbrv}
\bibliography{ref}

\end{document}